%versao 20_10_2022
\documentclass[12pt]{amsart}
\usepackage[latin1]{inputenc}
\usepackage{indentfirst}
\usepackage[pagewise,mathlines]{lineno}
\usepackage{amsfonts}
\usepackage{graphicx}
\usepackage{amsmath}
\usepackage{amssymb}
\usepackage{latexsym}
\usepackage{amscd}
\usepackage{xcolor}
\usepackage{float}
\usepackage[pagebackref]{hyperref}
\usepackage[linesnumbered,lined,ruled,commentsnumbered]{algorithm2e}

\maxdeadcycles=300

\setlength{\textwidth}{15cm} 
\setlength{\textheight}{22cm}
\setlength{\oddsidemargin}{1cm}
\setlength{\evensidemargin}{1cm}
    %DIST\^{A}NCIA ENTRE LINHAS

%\newcommand{\cO}{\mathcal O}
%\newcommand{\cX}{\mathcal X}
%\newcommand{\cJ}{\mathcal J}
%\newcommand{\cD}{\mathcal D}
%\newcommand{\cK}{\mathcal K}
%
%\newcommand{\bE}{\mathbf {E}}
%
\newcommand{\F}{\mathbb {F}}

\setcounter{page}{1}

\newtheorem{theorem}{Theorem}[section]
\newtheorem{definition}[theorem]{Definition}
\newtheorem{lemma}[theorem]{Lemma}

\newtheorem{proposition}[theorem]{Proposition}
\newtheorem{remark}[theorem]{Remark}

\begin{document}

%\linenumbers

\title[$r$-Primitive $k$-Normal elements in Arithmetic Progressions]{$r$-Primitive $k$-Normal elements in Arithmetic Progressions over Finite Fields}

\author{Josimar J.R. Aguirre$^1$, Ab\'ilio Lemos$^2$, Victor G.L. Neumann$^1$ and S\'avio Ribas$^3$}

\maketitle

\vspace{1ex}

\small{$^1$Faculdade de Matem\'{a}tica, Universidade Federal de Uberl\^{a}ndia, 38.408-902 Uberl\^{a}ndia-MG, Brazil, \url{josimar.mat@ufu.br}, \url{victor.neumann@ufu.br}}

\small{$^2$Departamento de Matem\'{a}tica, Universidade Federal de Vi\c{c}osa, 36570-900 Vi\c{c}osa-MG, Brazil, \url{abiliolemos@ufv.br}}

\small{$^3$Permanent: Departamento de Matem\'{a}tica, Universidade Federal de Ouro Preto, 35.400-000 Ouro Preto-MG, Brazil, \url{savio.ribas@ufop.edu.br}}

\small{$^3$Current: Institute of Mathematics and Scientific Computing, University of Graz, 8010 Graz, Styria, Austria.}

\begin{abstract}
Let $\mathbb{F}_{q^n}$ be a finite field with $q^n$ elements. For a positive divisor $r$ of $q^n-1$, the element $\alpha \in \mathbb{F}_{q^n}^*$ is called \textit{$r$-primitive} if its multiplicative order is $(q^n-1)/r$. Also, for a non-negative integer $k$, the element $\alpha \in \mathbb{F}_{q^n}$ is \textit{$k$-normal} over $\mathbb{F}_q$ if $\gcd(\alpha x^{n-1}+ \alpha^q x^{n-2} +  \ldots + \alpha^{q^{n-2}}x + \alpha^{q^{n-1}} , x^n-1)$ in $\mathbb{F}_{q^n}[x]$ has degree $k$. In this paper we discuss the existence of elements in arithmetic progressions $\{\alpha, \alpha+\beta, \alpha+2\beta, \ldots\alpha+(m-1)\beta\} \subset \mathbb{F}_{q^n}$ with $\alpha+(i-1)\beta$ being $r_i$-primitive and at least one of the elements in the arithmetic progression being $k$-normal over $\mathbb{F}_q$. We obtain asymptotic results for general $k, r_1, \dots, r_m$ and concrete results when $k = r_i = 2$ for $i \in \{1, \dots, m\}$.
\end{abstract}

\vspace{8ex}
\noindent
\textbf{Keywords:} $r$-primitive element, $k$-normal element, arithmetic progressions, finite 
fields.\\
\noindent
\textbf{MSC:} 12E20, 11T24

\section{Introduction}

For a positive integer $n$ and a prime power $q$, let $\mathbb{F}_{q^n}$ be the finite field with $q^n$ elements. 
We recall that the multiplicative group $\mathbb{F}_{q^n}^*$ is cyclic, and an element $\mathbb{F}_{q^n}$ 
is called \textit{primitive} if its multiplicative order is $q^n-1$. Primitive elements have many applications
in the field of cryptography, see \cite{blum, mel}. Let $r$ be a positive divisor of 
$q^n-1$. An element $\alpha \in \mathbb{F}_{q^n}^*$ is called \textit{$r$-primitive} if its multiplicative order 
is $(q^n-1)/r$. Therefore primitive elements in the usual sense are 1-primitive elements. In \cite{Cohen2021, Cohen2022} the authors found a characteristic function for the $r$-primitive elements. These elements that have high order (small values of $r$), without necessarily being primitive, are of great practical interest because in several
applications they may replace primitive elements.

An element $\alpha \in \mathbb{F}_{q^n}$ is {\em normal} over $\mathbb{F}_q$ if the set 
$B_{\alpha}=\{\alpha, \alpha^q, \ldots, \alpha^{q^{n-1}} \}$ spans
$\mathbb{F}_{q^n}$ as an $\mathbb{F}_q$-vector space. In this case, $B_{\alpha}$ is called a {\em normal basis}. 
Normal bases have many applications in the computational theory due to their efficient implementation in finite field arithmetic \cite{gao}. We will use an equivalence to define the $k$-normal elements (see \cite[Theorem 3.2]{knormal}). An element $\alpha \in \mathbb{F}_{q^n}$ is {\em $k$-normal} over $\mathbb{F}_q$ if $\alpha$ gives rise to a 
basis $\{\alpha,\alpha^{q},\ldots,\alpha^{q^{n-k-1}}\}$ of a $q$-modulus of dimension $n-k$ over $\mathbb{F}_q$. Therefore elements which are normal in the usual sense are $0$-normal. The $k$-normal elements can be used to reduce the multiplication process in finite fields, see \cite{negre}.

If we put these two properties together, we obtain a primitive
normal element. We can study the multiplicative structure of $\mathbb{F}_{q^n}$ while looking at %and at the same time see 
$\mathbb{F}_{q^n}$ as a vector space over $\mathbb{F}_q$. \textit{The Primitive Normal Basis Theorem} states that for any extension field $\mathbb{F}_{q^n}$ of $\mathbb{F}_q$, 
there exists a basis composed of primitive normal elements; this result was first proved by Lenstra
and Schoof \cite{lenstra} using a combination of character sums, sieving results and computer
search. There are several criteria in the literature for the existence of $k$-normal elements (see,
for example, \cite{lucas,sozaya,zhang}). In \cite{lucas1}, the authors worked out the case $k = 1$, and established a Primitive 1-Normal Element Theorem. In \cite{AN}, the authors showed conditions for the existence of primitive $2$-normal elements. Generalizing these ideas, in \cite{AN2} and \cite{RST} some conditions for the existence of $r$-primitive, $k$-normal elements in $\mathbb{F}_{q^n}$ over $\mathbb{F}_q$ were obtained. 
In \cite{RSTP}, %the characteristic function for the set of $k$-normal elements was obtained using additive characters, and then 
it was established a sufficient condition for the existence of an element $\alpha \in \mathbb F_{q^n}$ such that $\alpha$ and $\alpha^{-1}$ both are simultaneously $r$-primitive and $k$-normal over $\mathbb F_q$. In \cite{AN3}, two of the authors of the present paper studied the pairs $(x,F(x))$, where $F(x) = F_1(x)/F_2(x)$ is a rational function with some conditions on the degrees of $F_1(x)$ and $F_2(x)$, $x$ is $r_1$-primitive and $k_1$-normal, and $F(x)$ is $r_2$-primitive and $k_2$-normal.
%in \cite{AN2} are shown conditions for the existence of $r$-primitive, $k$-normal elements in $\mathbb{F}_{q^n}$ over $\mathbb{F}_q$. 

In \cite{carlitz2}, Carlitz showed that for every $n$, there exists a number 
$q_0(n)$ such that $\mathbb{F}_q$ contains $n$ consecutive primitive elements for all $q > q_0(n)$. %, {\color{red}using the ideas introduced by Carlitz \cite{carlitz} and Davenport \cite{davenport1}, refined in Lenstra and Schoof \cite{lenstra} (n\~ao entendi essa frase)}. 
For the case $n=2$, in \cite{cohenpairs} Cohen  showed that there exist two consecutive primitive elements in $\mathbb{F}_{q}$, showing that this is valid for $q>7$. In \cite{cohentriples}, the authors worked out the case $n=3$ and showed that $\mathbb{F}_q$ contains three consecutive primitive elements for all odd $q>169$. They also proved that $q_0(n) \leq \exp(2^{5.54n})$ for $n \geq 2$. In \cite{TT} the authors used computational methods (a variant of the `prime divisor tree') in order to prove %for proved 
that there are always four consecutive primitive
elements in the finite field $\mathbb{F}_q$ when $q>2401$. 
Considering now an algebraic extension of $\mathbb{F}_q$, in \cite{ASN} the authors showed conditions for the existence 
of arithmetic progression $\{\alpha, \alpha+\beta, \alpha+2\beta, \ldots, \alpha+(m-1)\beta \}$ with $\alpha, \beta \in \mathbb{F}_{q^n}$, such that all these elements are primitive and at least one of them is normal. In this direction, in this paper we are going to generalize the latter for the case when the elements $\alpha+(i-1)\beta$ are $r_i$-primitive for $i \in \{1,\ldots,m\}$ and at least one of them is $k$-normal. 

This paper is organized as follows: In Section 2, we provide a background material that is used along the paper. In Section 3, we present the general condition for the existence of
the aforementioned arithmetic progressions, 
% such that the $i^{\mathrm{th}}$ term is $r_i$-primitive in $\mathbb{F}_{q^n}$ over $\mathbb{F}_q$, with $i\in \{ 1,\ldots, m\},$ and at least one of them being $k$-normal,
as well as another derived condition using sieve methods.
In Section 4, we provide some numerical examples.

\section{Preliminaries}
In this section, we present some definitions and results required in
this paper.
We refer the reader to \cite{LN} 
for
basic results on finite fields.
For a positive integer $m$, $\phi(m)$ denotes the Euler totient function and $\mu(m)$ denotes
the M\"obius function.
\begin{definition}
\begin{enumerate}
\item[(a)] Let $f(x)\in \mathbb{F}_{q}[x]$. The Euler totient function for polynomials over $\mathbb{F}_q$ is given by
\[
\Phi_q(f)= \left| \left( \dfrac{\mathbb{F}_q[x]}{\langle f \rangle} \right)^{*} \right|,
\]
where $\langle f \rangle$ is the ideal generated by $f(x)$ in $\mathbb{F}_q[x]$.
\item[(b)] If $t$ is a positive integer {\em (}or a monic polynomial over $\mathbb{F}_q${\em )}, $W(t)$ denotes the number of square-free {\em (}monic{\em )} divisors of $t$.
\item[(c)] If $f(x)\in \mathbb{F}_{q}[x]$ is a monic polynomial, the polynomial M\"obius function $\mu_q$ is given by $\mu_q(f)=0$ if $f$ is not square-free and $\mu_q(f)=(-1)^r$ if $f$ is a product of $r$ distinct monic irreducible factors over $\mathbb{F}_q$.
\end{enumerate}
\end{definition}

\subsection{Freeness and characters.}
We present the concept of \textit{freeness}, introduced in Carlitz \cite{carlitz} and Davenport \cite{davenport1},
and refined in Lenstra and Schoof (see \cite{lenstra}). This 
concept is useful in the construction of certain characteristic functions over finite fields.

The additive group $\mathbb{F}_{q^n}$ is an $\mathbb{F}_q[x]$-module where the action
is given by $f \circ \alpha =\displaystyle  \sum_{i=0}^r a_i \alpha^{q^i}$,
for $f=\displaystyle \sum_{i=0}^r a_ix^i\in \mathbb{F}_q[x]$
and $\alpha \in \mathbb{F}_{q^n}$.
An element $\alpha \in \F_{q^n}$ has $\F_q$-order $h\in \mathbb{F}_q[x]$
if $h$ is the lowest degree monic polynomial such that $h \circ \alpha=0$.
The $\mathbb{F}_q$-order of $\alpha$ will be denoted by $\mathrm{Ord} (\alpha)$.
It is known that the $\F_q$-order of an element $\alpha \in \F_{q^n}$
divides $x^n-1$. 
An additive character $\chi$ 
of $\F_{q^n}$ is a
group homomorphism of $\mathbb{F}_{q^n}$ to $\mathbb{C}^*$.
The
group of additive characters $\widehat{\mathbb{F}}_{q^n}$ becomes 
an $\mathbb{F}_q[x]$-module by defining
$f\circ \chi (\alpha)=\chi(f \circ \alpha)$ for $\chi \in \widehat{\mathbb{F}}_{q^n}$,
$\alpha \in \mathbb{F}_{q^n}$ and $f \in \mathbb{F}_q[x]$.
An additive character $\chi$ has $\F_q$-order $h \in \mathbb{F}_q[x]$
if $h$ is the monic polynomial of the smallest degree such
that $h \circ \chi = \chi_0$
is the trivial additive character given by $\chi_0(\alpha) = 1$ for any $\alpha \in \mathbb{F}_{q^n}$.
The $\mathbb{F}_q$-order of $\chi$ will be denoted by $\mathrm{Ord} (\chi)$.

Let $g\in \mathbb{F}_q[x]$ be a  divisor of $x^n-1$. We say that an element
$\alpha \in \mathbb{F}_{q^n}$ is $g$-free
if for every polynomial $h \in \mathbb{F}_q[x]$  such that $h \mid g$ and $h\neq 1$, there is no element
$\beta \in \F_{q^n}$ satisfying $\alpha= h \circ \beta$.
As in the multiplicative case, from \cite[Theorem 13.4.4.]{galois} we have,
for any $\alpha \in \F_{q^n},$ %we get
\[
\Omega_g(\alpha)=
\Theta(g) \int_{h|g} \chi_h(\alpha)
=\left\{
\begin{array}{ll}
	1, \quad & \text{if } \alpha \text{ is } g\text{-free}, \\
	0,            & \text{otherwise,}
\end{array}
\right.
\]
where 
$\Theta(g)= \frac{\Phi_q(g)}{q^{\deg(g)}}$,
$\displaystyle\int_{h|g} \chi_h$ denotes the sum
$\displaystyle \sum_{h|g} \frac{\mu_q(h)}{\Phi_q(h)} \sum_{(h)} \chi_h$,
$\displaystyle \sum_{h|g}$ runs over all the monic divisors $h\in \mathbb{F}_q[x]$ of $g$,
$\chi_h$ is an additive character of $\F_{q^n}$, and the sum
$\displaystyle \sum_{(h)} \chi_h$ runs over all additive characters of $\mathbb{F}_q$-order $h$.
It is known that there exist $\Phi_q(h)$ of those characters.
It is well known that an element $\alpha \in \mathbb{F}_{q^n}$ is normal if and only if $\alpha$ is $(x^n-1)$-free.

A multiplicative character $\eta$ 
of $\F_{q^n}^*$ is a
group homomorphism of $\F_{q^n}^*$ to $\mathbb{C}^*$.
The group of multiplicative characters $\widehat{\mathbb{F}}_{q^n}^*$ becomes 
a $\mathbb{Z}$-module by defining
$\eta^r(\alpha)=\eta(\alpha^r)$ for $\eta \in \widehat{\mathbb{F}}_{q^n}^*$,
$\alpha \in \mathbb{F}_{q^n}^*$ and $r \in \mathbb{Z}$.
The order of a multiplicative character $\eta$ is the least positive integer $d$ such that
$\eta (\alpha)^d=1$ for any $\alpha \in \F_{q^n}^*$.

%Let $m$ be a divisor of $q^n-1$. We say that an element 
%$\alpha \in \mathbb{F}_{q^n}^*$ is $m$-free
%if for every $d \mid m$ such that $d\neq 1$ there is no element
%$\beta \in \F_{q^n}$ satisfying 
%$\alpha= \beta^d$.
%Following \cite[Theorem 13.4.4 e.g.]{galois} we have that
%for any $\alpha \in \F_{q^n}^*$ we get
%$$w_m(\alpha) = 
%\theta(m) \int_{d|m} \eta_d(\alpha)
%=\left\{
%\begin{array}{ll}
%	1, \quad & \text{if } \alpha \text{ is } m\text{-free}, \\
%	0,            & \text{otherwise,}
%\end{array}
%\right.
%$$
%where $\theta(m)=\frac{\varphi(m)}{m}$,
%$\displaystyle\int_{d|m} \eta_d$ denotes the sum
%$\displaystyle \sum_{d|m} \frac{\mu(d)}{\varphi(d)} \sum_{(d)} \eta_d$,
%$\eta_d$ is a multiplicative character of $\F_{q^n}$, and the sum
%$\displaystyle \sum_{(d)} \eta_d$ runs over all the multiplicative characters of order $d$.
%It is well known that an element $\alpha \in \mathbb{F}_{q^n}^*$ is primitive if and only if $\alpha$ is $(q^n-1)$-free.

There are some works which characterize $r$-primitive elements of $\mathbb{F}_{q^n}$ using characters,
like \cite{Cohen2021} and \cite{Cohen2022}. We will follow \cite{Cohen2022} and, as in that work, for positive integers $a$ and $b$, we set $a_{(b)}=\frac{a}{\gcd(a,b)}$.

\begin{definition}{\cite[Definition 3.1]{Cohen2022}}
	For a divisor $r$ of $q^n-1$
	and a divisor $R$ of $\frac{q^n-1}{r}$, let $\mathcal{C}_r$ be the cyclic multiplicative subgroup of $\mathbb{F}_{q^n}^*$
	of order $\frac{q^n-1}{r}$. We say that an element $\alpha \in  \mathbb{F}_{q^n}^*$ is $(R,r)$-free if
	$\alpha \in \mathcal{C}_r$ and $\alpha$ is $R$-free in $\mathcal{C}_r$, i.e., 
	if $\alpha=\beta^s$ with $\beta \in \mathcal{C}_r$ and $s \mid R$, then $s=1$.
\end{definition}

\begin{remark}
	An element $\alpha \in  \mathbb{F}_{q^n}^*$ is $r$-primitive if and only if $\alpha$ is $(\frac{q^n-1}{r},r)$-free.
\end{remark}

Let $\mathbb{I}_{R,r}$ be the characteristic function of $(R,r)$-free elements of $\mathbb{F}_{q^n}^*$, i.e.,
for $\alpha \in \mathbb{F}_{q^n}^*$,
\[
\mathbb{I}_{R,r}(\alpha) =
\left\{
\begin{array}{ll}
	1, \quad & \text{if } \alpha \text{ is } (R,r)\text{-free}, \\
	0,            & \text{otherwise.}
\end{array}
\right.
\]

Following \cite[Proposition 3.6.]{Cohen2022}, %we have that
for any $\alpha \in \mathbb{F}_{q^n}^*$ we have %get
\[\mathbb{I}_{R,r}(\alpha) = 
\frac{\theta(R)}{r} \mathop{\int}_{d_{(r)}|Rr} \eta_d(\alpha),
\]
where $\theta(R)=\frac{\phi(R)}{R}$,
$\displaystyle\mathop{\int}_{d_{(r)}|Rr} \eta_d$ stands for the sum
$\displaystyle \sum_{d|Rr} \frac{\mu(d_{(r)})}{\phi(d_{(r)})} \sum_{(d)} \eta_d$,
$\eta_d$ is a multiplicative character of $\F_{q^n}^*$  and the sum
$\displaystyle \sum_{(d)} \eta_d$ runs over all the multiplicative characters of order $d$.

To bound the sums above, we will use the following particular case of \cite[Lemma 2.5]{Cohen2022}.

\begin{lemma}\label{lemmacohen2022}
	For positive integers $R$, $r$, we have that
	\[
	\sum_{d\mid R} \frac{|\mu(d_{(r)})|}{\phi(d_{(r)})} \cdot \phi(d) = \gcd(R,r)\cdot W(R_{(r)}).
	\]
\end{lemma}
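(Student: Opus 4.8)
The plan is to prove the identity by a standard multiplicativity reduction: both sides are multiplicative arithmetic functions of $R$ (with $r$ a fixed parameter), so it suffices to check the equality of the corresponding local factors at each prime $p \mid R$, and this local identity then follows from a short case analysis based on whether $v_p(R) \le v_p(r)$ or not.

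First I would set $f(d) = \frac{|\mu(d_{(r)})|}{\phi(d_{(r)})}\,\phi(d)$ and note that $d \mapsto f(d)$ is multiplicative: if $d = d'd''$ with $\gcd(d',d'')=1$, then $\gcd(d,r) = \gcd(d',r)\gcd(d'',r)$, hence $d_{(r)} = d'_{(r)}\,d''_{(r)}$ with $d'_{(r)}, d''_{(r)}$ coprime, and $\mu$, $\phi$ in the numerator/denominator and $\phi(d)$ all factor accordingly; also $f(1)=1$. Therefore $\sum_{d \mid R} f(d) = \prod_{p^a \parallel R}\bigl(\sum_{j=0}^{a} f(p^j)\bigr)$. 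Since the right-hand side $\gcd(R,r)\cdot W(R_{(r)})$ is obviously multiplicative in $R$, with $v_p(\gcd(R,r)) = \min(a,b)$ and with $p \mid R_{(r)}$ exactly when $a > b$ (where $a := v_p(R)$, $b := v_p(r)$), it remains to prove for each prime $p$ with $a \ge 1$ the local identity
$$
S_p := \sum_{j=0}^{a} \frac{|\mu(p^{\max(j-b,0)})|}{\phi(p^{\max(j-b,0)})}\,\phi(p^j) \;=\; p^{\min(a,b)} \cdot \begin{cases} 1, & a \le b,\\[2pt] 2, & a > b.\end{cases}
$$

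To evaluate $S_p$ I would use only that $\mu(p^i)=0$ for $i \ge 2$ together with the telescoping identity $\sum_{j=0}^{m}\phi(p^j) = p^m$. If $a \le b$, every exponent $\max(j-b,0)$ appearing is $0$, so $S_p = \sum_{j=0}^{a}\phi(p^j) = p^a = p^{\min(a,b)}$. If $a > b$, the terms $0 \le j \le b$ contribute $\sum_{j=0}^{b}\phi(p^j) = p^b$; the term $j = b+1$ contributes $\frac{|\mu(p)|}{\phi(p)}\phi(p^{b+1}) = \frac{\phi(p^{b+1})}{p-1} = p^b$; and the terms $j \ge b+2$ vanish because $\mu(p^{j-b}) = 0$. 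Hence $S_p = 2p^b = 2\,p^{\min(a,b)}$, which matches the displayed formula. Multiplying the local factors over $p \mid R$ yields the lemma.

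The argument is entirely routine once the reduction is in place; the only mild obstacle is the exponent bookkeeping around $\max(v_p(d)-v_p(r),0)$ and ensuring the split $a\le b$ versus $a>b$ is matched consistently with the definition of $R_{(r)}$ and $W$, which is why I would write out and verify the local identity explicitly before assembling the product.
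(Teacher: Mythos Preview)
Your proof is correct. The multiplicativity of $f(d)=\dfrac{|\mu(d_{(r)})|}{\phi(d_{(r)})}\,\phi(d)$ is justified exactly as you say (since $d_{(r)}$ splits as $d'_{(r)}d''_{(r)}$ with coprime factors when $\gcd(d',d'')=1$), the right-hand side is clearly multiplicative in $R$, and your local computation at a prime $p$ with $a=v_p(R)$, $b=v_p(r)$ is accurate in both cases $a\le b$ and $a>b$.

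As for the comparison with the paper: the paper does not actually prove this lemma---it is stated as a particular case of \cite[Lemma~2.5]{Cohen2022} and simply cited. So your approach is not just ``the same'' or ``different''; it supplies a complete, self-contained argument where the paper relies on an external reference. The multiplicativity-plus-local-check strategy you use is the natural one for this type of identity, and is almost certainly what lies behind the cited result as well.
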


\begin{remark}\label{construct-k-r}
In \cite[Lemma 3.1]{lucas}, Reis provided the following %gives a 
method to construct $k$-normal elements. Let $\beta \in \mathbb{F}_{q^n}$ be a normal element and $f\in \mathbb{F}_q[x]$ be a divisor of $x^n-1$ of degree $k$. Then $\alpha = f \circ \beta$ is $k$-normal.
%(see \cite[Lemma 3.1]{lucas}). 
%{\color{red}In the same way, if $\beta \in \mathbb{F}_{q^n}$ is a primitive element, then $\beta^r$ is $r$-primitive
%for any divisor $r$ of $q^n-1$. (retirar?)}
\end{remark}

We also have that $\mathbb{F}_{q^n}^*$ and $\widehat{\mathbb{F}}_{q^n}^*$
are isomorphic as $\mathbb{Z}$-modules, and
$\mathbb{F}_{q^n}$ and $\widehat{\mathbb{F}}_{q^n}$
are isomorphic as $\mathbb{F}_q[x]$-modules (see \cite[Theorem 13.4.1.]{galois}).

%We will also need the following identities.
%\begin{theorem}\cite[Theorem 5.4]{LN}\label{trivial}
%If $\chi$ is a nontrivial character of the finite abelian group $G$, then
%$$
%\sum_{g \in G} \chi(g) = 0.
%$$
%If $g \in G$ with $g\neq 1_G$, then
%$$
%\sum_{\chi \in \widehat{G}} \chi(g) = 0.
%$$
%\end{theorem}

%,
%$\eta_d$ is a multiplicative character of $\F_{q^n}$, and the sum
%$\displaystyle \sum_{(d)} \eta_d$ runs over all the multiplicative characters of order $d$.

We will also need the following definitions.
\begin{definition}\label{char0}
For any $\alpha \in \mathbb{F}_{q^n}$ we define the following character sum:
\[
I_0(\alpha) = \frac{1}{q^n} \sum_{\psi \in \widehat{\mathbb{F}}_{q^n}} \psi(\alpha).
\]
%Note that $I_0(\alpha)=1$ if $\alpha=0$, and $I_0(\alpha)=0$ otherwise by the character orthogonality property.
\end{definition}

Note that, by the character orthogonality property, $I_0(\alpha)=1$ if $\alpha=0$, and $I_0(\alpha)=0$ otherwise.

\subsection{Estimates.} 
To finish this section, we present some estimates that are used along the next sections.
%An estimate 
%adapted from \cite[Theorem 5.6]{Fu} %with $d=1$
%that will be used  next.

%\begin{lemma}  \label{cotaadmult}
%	Let $f(x),g(x) \in \F_{q^n}(x)$ be rational functions. Write
%	$f(x)=\prod_{j=1}^k f_j(x)^{N_v}$, where
%	$f_j(x) \in \F_{q^n}[x]$ are irreducible polynomials  and
%	$N_v$ are non-zero integers. Let $D_1=\sum_{j=1}^k \deg (f_j)$,
%	let $D_2=\max (\deg (g), 0)$, let $D_3$ be the degree of the denominator of $g(t)$
%	and let $D_4$ be the sum of degrees of those irreducible polynomials dividing the denominator of $g$,
%	but distinct from $f_j(x)$ ($j=1\,\ldots, k$). Let $\eta: \F_{q^n}^\ast \longrightarrow \mathbb{C}^\ast$
%	be a multiplicative character of $\F_{q^n}^*$ and let $\chi: \F_{q^n} \longrightarrow \mathbb{C}^\ast$
%	be a non-trivial additive character of $\F_{q^n}$. Suppose that
%	$g(x)$ is not of the form $r(x)^{q^n}-r(x)$ in $\F(x)$. Then we have the estimate
%	$$
%	\displaystyle \left|
%	\sum_{\substack{\alpha \in \mathbb{F}_{q^n}  \\ f(\alpha)\neq 0,\infty , g(\alpha)\neq \infty}} \eta(f(\alpha)) \chi(g(\alpha)) \right|
%	\leq
%	\left( D_1 + D_2 + D_3 + D_4 - 1  \right) q^{\frac{n}{2}}.
%	$$	
%\end{lemma}

\begin{lemma}\label{case-a}
\cite[Theorem 5.6]{Fu} Let $r \in \mathbb{N}$ be a divisor of $q^n-1$,
let $\eta$ be a multiplicative character{\color{red},} and let $\psi$ be a non-trivial  additive character.
Then
\[
\left| \sum_{\alpha \in \mathbb{F}_{q^n}^*} \eta(\alpha) \psi(\alpha^r)\right|
\leq r q^{n/2}.
\]
\end{lemma}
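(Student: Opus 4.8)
The inequality is the Weil bound for a mixed multiplicative--additive character sum, and since $r \mid q^n-1$ I would prove it by reducing to Gauss sums rather than invoking the general Weil estimate for $\sum \eta(f)\psi(g)$ (which is awkward here because $\deg(x^r)=r$ may be divisible by the characteristic of $\mathbb{F}_q$). Write $S=\sum_{\alpha\in\mathbb{F}_{q^n}^*}\eta(\alpha)\psi(\alpha^r)$ and set $\mu_r=\{\zeta\in\mathbb{F}_{q^n}^*:\zeta^r=1\}$; since $r\mid q^n-1$ we have $|\mu_r|=r$, and the map $\alpha\mapsto\alpha^r$ carries $\mathbb{F}_{q^n}^*$ onto the subgroup $T$ of index $r$, with each fibre a coset of $\mu_r$.

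First I would group the terms of $S$ according to the value $\beta=\alpha^r$, obtaining $S=\sum_{\beta\in T}\psi(\beta)\sum_{\alpha^r=\beta}\eta(\alpha)$. Fixing any $\gamma$ with $\gamma^r=\beta$, the inner sum equals $\eta(\gamma)\sum_{\zeta\in\mu_r}\eta(\zeta)$, which vanishes unless $\eta$ is trivial on $\mu_r$. If $\eta|_{\mu_r}\neq 1$, then $S=0$ and the bound is immediate. Otherwise $\sum_{\zeta\in\mu_r}\eta(\zeta)=r$ and $\beta\mapsto\eta(\gamma)$ is a well-defined character $\widetilde\eta$ of $T$ (independent of the choice of $\gamma$), so $S=r\sum_{\beta\in T}\widetilde\eta(\beta)\psi(\beta)$.

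Next I would detect membership in $T$ by characters: fix a multiplicative character $\rho$ of $\mathbb{F}_{q^n}^*$ of order $r$ with kernel $T$, so that $\frac1r\sum_{j=0}^{r-1}\rho^j$ is the indicator function of $T$, and extend $\widetilde\eta$ to a character $\eta^*$ of $\mathbb{F}_{q^n}^*$. Then $S=\sum_{j=0}^{r-1}\sum_{\beta\in\mathbb{F}_{q^n}^*}\eta^*(\beta)\rho^j(\beta)\psi(\beta)$ is a sum of $r$ Gauss sums of $\mathbb{F}_{q^n}$. Since $\psi$ is non-trivial, each such Gauss sum has absolute value $q^{n/2}$ when the multiplicative part is non-trivial and absolute value $1$ otherwise; either way it is at most $q^{n/2}$, and the triangle inequality gives $|S|\le r\,q^{n/2}$.

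The argument presents no serious obstacle; the points requiring care are that $r\mid q^n-1$ forces $|\mu_r|=r$ exactly (so that the fibres of $\alpha\mapsto\alpha^r$ behave as claimed and $\widetilde\eta$ is well defined), and the standard evaluation $|G|\le q^{n/2}$ of Gauss sums over $\mathbb{F}_{q^n}$, including the degenerate case of the trivial multiplicative character. If one prefers, the estimate may simply be quoted from \cite{Fu}.
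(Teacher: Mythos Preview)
Your argument is correct. The paper does not supply a proof of this lemma at all; it simply quotes the inequality as a special case of \cite[Theorem~5.6]{Fu}, the general Fu--Wan bound for mixed character sums. Your route is genuinely different and more elementary: by exploiting the hypothesis $r\mid q^n-1$ you fibre the sum over $\alpha\mapsto\alpha^r$, reduce to a sum over the index-$r$ subgroup $T$, and then express everything as a sum of $r$ Gauss sums, each bounded by $q^{n/2}$. This sidesteps the general Weil machinery entirely and, as you observe, avoids any worry about whether $r$ is divisible by the characteristic. The only steps requiring care---well-definedness of $\widetilde\eta$ when $\eta|_{\mu_r}=1$, existence of an extension $\eta^*$ of $\widetilde\eta$ from $T$ to $\mathbb{F}_{q^n}^*$, and the evaluation $|G(\chi,\psi)|\le q^{n/2}$ including the degenerate case $\chi$ trivial (where the Gauss sum equals $-1$)---are all handled correctly. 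The trade-off is that the cited result from \cite{Fu} is needed elsewhere in the paper anyway (Lemma~\ref{cotaparaf}), so the citation costs nothing; your proof, on the other hand, is self-contained for this particular estimate.
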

%\begin{proof}
%Follows directly from \cite[Theorem 5.6]{Fu}.
%%Lemma \ref{cotaadmult}.
%\end{proof}

\begin{lemma}\label{case-b}\cite[Lemma 2.5]{AN2} 
Let $f \in \mathbb{F}_q[x]$ be a divisor of $x^n-1$ of degree $k$, and
let $\chi$ and $\psi$ be additive characters. Then
\[
\sum_{\beta \in \mathbb{F}_{q^n}}
\chi(\beta)\psi(f \circ \beta)^{-1} =
\left\{
\begin{array}{ll}
	q^n & \text{if } \chi = f \circ \psi, \\
	0 & \text{if } \chi \neq f \circ \psi .
\end{array}
\right.
\]
Furthermore, for a given additive character $\chi$, 
the set
$\{\psi \in  \widehat{\mathbb{F}}_{q^n} \mid \chi = f \circ \psi\}$
has $q^k$ elements if 
$\mathrm{Ord}(\chi) \mid \frac{x^n-1}{f}$, and it is %an 
the empty set
if $\mathrm{Ord}(\chi) \nmid \frac{x^n-1}{f}$.
\end{lemma}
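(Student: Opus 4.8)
The plan is to compute the inner sum directly via the orthogonality of additive characters, exploiting the $\mathbb{F}_q[x]$-module structure on $\widehat{\mathbb{F}}_{q^n}$. First I would fix the additive character $\chi$ and rewrite $\psi(f\circ\beta)^{-1} = \overline{\psi(f\circ\beta)} = \bar\psi(f\circ\beta) = (f\circ\bar\psi)(\beta)$, using that $\eta\mapsto\bar\eta$ is a group automorphism and that $\psi\mapsto f\circ\psi$ is $\mathbb{F}_q$-linear in the character argument. Hence $\chi(\beta)\psi(f\circ\beta)^{-1} = \chi(\beta)\,(f\circ\bar\psi)(\beta) = \bigl(\chi\cdot(f\circ\bar\psi)\bigr)(\beta)$, where the product is taken in the character group $\widehat{\mathbb{F}}_{q^n}$. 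Summing over $\beta\in\mathbb{F}_{q^n}$ and applying orthogonality of additive characters gives $q^n$ if the character $\chi\cdot(f\circ\bar\psi)$ is trivial, i.e.\ if $\chi = \overline{f\circ\bar\psi} = f\circ\psi$, and $0$ otherwise. This settles the first displayed equation.

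For the second assertion, I would count the solutions $\psi$ of the equation $f\circ\psi=\chi$ inside $\widehat{\mathbb{F}}_{q^n}$. Since $\widehat{\mathbb{F}}_{q^n}\cong\mathbb{F}_{q^n}$ as $\mathbb{F}_q[x]$-modules (quoted from \cite[Theorem 13.4.1.]{galois}), it is equivalent to count solutions $\gamma$ of $f\circ\gamma = \delta$ for a fixed $\delta\in\mathbb{F}_{q^n}$ with $\mathrm{Ord}(\delta)$ equal to $\mathrm{Ord}(\chi)$. The map $\gamma\mapsto f\circ\gamma$ is an $\mathbb{F}_q$-linear endomorphism of $\mathbb{F}_{q^n}$, and its kernel is $\{\gamma : f\circ\gamma=0\}$, which is exactly the set of elements whose $\mathbb{F}_q$-order divides $\gcd(f,x^n-1)=f$ (as $f\mid x^n-1$). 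The number of such elements is $q^{\deg\gcd(f,x^n-1)}=q^{k}$, so the fibre of $f\circ(-)$, when nonempty, has exactly $q^k$ elements. It remains to identify when the fibre over $\delta$ is nonempty: $\delta$ lies in the image of $f\circ(-)$ iff $\delta$ is annihilated by the complementary factor, i.e.\ iff $\tfrac{x^n-1}{f}\circ\delta=0$, which holds iff $\mathrm{Ord}(\delta)\mid \tfrac{x^n-1}{f}$; translating back through the module isomorphism gives the condition $\mathrm{Ord}(\chi)\mid\tfrac{x^n-1}{f}$, and emptiness otherwise.

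The main obstacle is making the surjectivity/nonemptiness criterion rigorous: one must argue that the image of the linear map $\gamma\mapsto f\circ\gamma$ on $\mathbb{F}_{q^n}$ coincides with the submodule annihilated by $\tfrac{x^n-1}{f}$. This follows from the structure of $\mathbb{F}_{q^n}$ as a cyclic $\mathbb{F}_q[x]/(x^n-1)$-module: writing $\mathbb{F}_{q^n}\cong\mathbb{F}_q[x]/(x^n-1)$ as modules, multiplication by $f$ has image $(f)/(x^n-1)$, which is precisely the annihilator of $\tfrac{x^n-1}{f}$, and this submodule has $\mathbb{F}_q$-dimension $n-k$. Once this is in place, the dimension count of the kernel (degree of $\gcd(f,x^n-1)$) combined with rank-nullity pins down both the fibre size $q^k$ and the exact description of when $\chi$ admits a preimage, completing the proof. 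Throughout I would be careful that $f\mid x^n-1$ is used essentially — it is what makes $f$ act as a genuine module endomorphism with the clean kernel and image description above.
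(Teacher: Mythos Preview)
Your proof is correct. Note that the paper does not supply its own proof of this lemma: it is quoted verbatim from \cite[Lemma 2.5]{AN2}, so there is no in-paper argument to compare against. Your approach --- orthogonality of additive characters for the first identity, followed by a fibre count for the $\mathbb{F}_q$-linear map $\gamma\mapsto f\circ\gamma$ on the cyclic module $\mathbb{F}_{q^n}\cong\mathbb{F}_q[x]/(x^n-1)$ --- is the standard one and is essentially what the cited reference does.

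Two very minor remarks. First, when you pass from $\widehat{\mathbb{F}}_{q^n}$ to $\mathbb{F}_{q^n}$ you write ``for a fixed $\delta$ with $\mathrm{Ord}(\delta)$ equal to $\mathrm{Ord}(\chi)$''; strictly speaking you need the specific $\delta$ corresponding to $\chi$ under the module isomorphism, not merely an element of the same order, but since both the fibre size and the nonemptiness criterion depend only on $\mathrm{Ord}(\delta)$ your conclusion is unaffected. Second, the assertion that the image of multiplication by $f$ equals the annihilator of $\tfrac{x^n-1}{f}$ holds in $\mathbb{F}_q[x]/(x^n-1)$ regardless of whether $x^n-1$ is squarefree (i.e.\ even when $p\mid n$), because the factorisation $x^n-1=f\cdot\tfrac{x^n-1}{f}$ alone gives both inclusions; you implicitly use this and it is worth making explicit since the paper does not restrict to $\gcd(p,n)=1$.
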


%\begin{lemma}  \label{lema cota}
%Let $h(x) \in \mathbb{F}_{q}(x)$ be a rational function. Write $h(x) = 
%\prod_{j=1}^{r} h_{j}(x)^{N_v}$, where $h_j(x) \in \mathbb{F}_{q}[x]$ are 
%irreducible polynomials and $N_v$ are nonzero integers. Let $\chi$ be a 
%multiplicative character of $\mathbb{F}_{q}$. Suppose that the rational 
%function $h(x)$ is not of the form $g(x)^{ord(\chi)}$ 
%in $\mathbb{F}(x)$, where $\mathbb{F}$ is an algebraic closure of $\mathbb{F}_q$. Then we have
%$$
%\displaystyle \left|
%\sum_{\substack{\alpha \in \mathbb{F}_q  \\ h(\alpha)\neq 0 , h(\alpha)\neq 
%\infty}} \chi(h(\alpha)) \right|
%\leq
%\left(\sum_{j=1}^{r} \deg(h_j)-1 \right) q^{\frac{1}{2}}.
%$$
%\end{lemma}

%We will need the following result, which is modeled after
%\cite[Lemma 3.3]{CH} and
%\cite[Lemma 4.1]{Kapetanakis-Reis} and, like these results, is proved using the 
%multiplicativity of the function $W(\cdot)$ and the fact that if a positive 
%integer $M$ has $l$ distinct prime divisors then $W(M) = 2^l$.
%
%
%\begin{lemma}\label{cota-t}
%	Let $M$ be a positive integer and $t$ be a positive real number.
%	Then
%	$W(M)
%	\leq A_t \cdot M^{\frac{1}{t}}$,
%	where
%	$$
%	A_t=\prod_{\substack{\wp^{\alpha_{\wp}} < 2^t \\ \wp \text{ is prime}
%			\\ \wp^{\alpha_{\wp}} \mid M}}
%	\frac{2}{\sqrt[t]{\wp^{\alpha_{\wp}}}},
%	$$
%and for any prime $\wp$, $\alpha_{\wp}$ is defined as the largest positive integer such that
%$\wp^{\alpha_{\wp}} < 2^t$ and $\wp^{\alpha_{\wp}} \mid M$.
%\end{lemma}

The next result is a combination of \cite[Theorem 5.5]{Fu} and a special case of \cite[Theorem 5.6]{Fu}.

\begin{lemma}\label{cotaparaf}
Let $v(x),u(x) \in \mathbb{F}_{q^n}(x)$ be rational functions. Write $v(x)=\prod_{j=1}^k s_j(x)^{N_v}$, 
where $s_j(x) \in \mathbb{F}_{q^n}[x]$ are irreducible polynomials, pairwise non-associated, and $N_v$ 
are non-zero integers. Let $D_1=\sum_{j=1}^k \deg(s_j)$, $D_2=\max\{\deg(u),0\}$, $D_3$ be the degree of the 
denominator of $u(x)$ and $D_4$ be the sum of degrees of those irreducible polynomials dividing the denominator 
of $u$, but distinct from $s_j(x)$ {\em (}$j\in\{1,\ldots,k\}${\em )}. Let $\chi$ and $\psi$ be, respectively, a multiplicative 
character and a non-trivial additive character of $\mathbb{F}_{q^n}$. Also, denote by $\mathbb F$ the algebraic closure of $\mathbb F_{q^n}$.
\begin{enumerate}
\item[a)] Assume that $v(x)$ is not of the form $r(x)^{ord(\chi)}$ in $\mathbb{F}(x)$. %, where $\mathbb{F}$ is the algebraic closure of $\mathbb{F}_{q^n}$. 
Then
\[
\left| \sum_{\substack{\alpha \in \mathbb{F}_{q^n} \\ v(\alpha) \neq 0, v(\alpha) \neq \infty}} 
\chi(v(\alpha)) \right|  \leq (D_1-1)q^{\frac{n}{2}}.
\]
\item[b)] Assume that $u(x)$ is not of the form $r(x)^{q^n}-r(x)$ in $\mathbb{F}(x)$. %, where $\mathbb{F}$ is the algebraic closure of $\mathbb{F}_{q^n}$. 
Then
\[
\left| \sum_{\substack{\alpha \in \mathbb{F}_{q^n} \\ v(\alpha) \neq 0, v(\alpha) \neq \infty \\ u(\alpha) \neq \infty}} 
\chi(v(\alpha)) \psi(u(\alpha)) \right|  \leq (D_1+D_2+D_3+D_4-1)q^{\frac{n}{2}}.
\]
\end{enumerate}
\end{lemma}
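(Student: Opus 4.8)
The plan is to deduce both estimates from the two cited results of Fu, which do the substantive work; what remains is to match the combinatorial data $D_1,\dots,D_4$ with the conductor quantities appearing there and to confirm that the excluded cases coincide. Throughout set $Q=q^{n}$, and view each sum as running over the $\mathbb{F}_Q$-rational points of the projective line, with $\chi\circ v$ and $\psi\circ u$ regarded respectively as a multiplicative and an additive character of the function field $\mathbb{F}_Q(x)$. The estimates then follow from the Weil bounds (the Riemann Hypothesis for curves over $\mathbb{F}_Q$) applied to the $L$-function of the associated Kummer --- respectively Artin--Schreier--Kummer --- cover of the line, the degree of that $L$-polynomial being what controls the bound.

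For part (a), the assignment $\alpha\mapsto\chi(v(\alpha))$ defines a multiplicative character of $\mathbb{F}_Q(x)$ ramified only at the zeros and poles of $v$, i.e.\ at the places attached to $s_1,\dots,s_k$ and possibly at infinity. The hypothesis that $v$ is not an $\mathrm{ord}(\chi)$-th power \emph{in} $\mathbb{F}(x)$ is exactly the statement that this character is nontrivial --- reading the condition in $\mathbb{F}(x)$ rather than in $\mathbb{F}_{q^{n}}(x)$ is what makes it sharp --- and this is the nondegeneracy hypothesis of \cite[Theorem 5.5]{Fu}. That theorem then gives $\bigl|\sum_{\alpha}\chi(v(\alpha))\bigr|\le(D_1-1)q^{n/2}$, where $D_1=\sum_{j}\deg(s_j)$ records the distinct zeros and poles of $v$ and enters through the degree of the $L$-polynomial.

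For part (b), we additionally twist by $\psi\circ u$, obtaining an Artin--Schreier--Kummer character of $\mathbb{F}_Q(x)$. Since $\psi$ is nontrivial and $u$ is not of the form $r(x)^{Q}-r(x)$ in $\mathbb{F}(x)$, the additive part is nontrivial, hence so is the whole composite character --- the wild ramification introduced at the poles of $u$ cannot be cancelled by the tame Kummer part --- which is precisely what \cite[Theorem 5.6]{Fu} requires (again, the phrase ``in $\mathbb{F}(x)$'' is the sharp form of this nondegeneracy condition, since over the algebraic closure every constant lies in the image of $r\mapsto r^{Q}-r$). The conductor now acquires, besides the $D_1$ coming from the zeros and poles of $v$, the contributions of the poles of $u$: a pole at infinity of order $D_2=\max\{\deg(u),0\}$, the finite poles of total multiplicity $D_3$ (the degree of the denominator of $u$), and the residue-degrees $D_4$ of those finite poles of $u$ not already among $s_1,\dots,s_k$. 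Hence the $L$-polynomial has degree at most $D_1+D_2+D_3+D_4-1$, and the Weil bound over $\mathbb{F}_Q$ gives the stated estimate.

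The one genuine subtlety --- the step I expect to be the main obstacle in a self-contained treatment --- is the conductor bookkeeping in part (b): one must check that a place which is simultaneously a zero or pole of $v$ and a pole of $u$ is counted with the correct local exponent (the one governed by the wild ramification from $u$, rather than the sum of the two local contributions), which is exactly why $D_4$ ranges only over the poles of $u$ distinct from the $s_j$; and one must verify that the two exceptional loci, $v=r(x)^{\mathrm{ord}(\chi)}$ and $u=r(x)^{Q}-r(x)$ over $\mathbb{F}$, are precisely the degenerate cases lying outside the hypotheses of \cite[Theorems 5.5 and 5.6]{Fu}. Since both of these are already built into those theorems, the proof reduces to this verification.
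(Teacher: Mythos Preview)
Your approach is the same as the paper's: both defer to \cite[Theorems~5.5 and~5.6]{Fu}. In fact the paper gives no proof at all --- it simply records the lemma as ``a combination of \cite[Theorem~5.5]{Fu} and a special case of \cite[Theorem~5.6]{Fu}'' --- so your sketch, which spells out the conductor bookkeeping behind those citations, is already more detailed than what the paper provides.
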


\section{General results}\label{sectiongen}

Througout this section, $q$ denotes a prime power, $n \ge 2$ is an integer, $m$ is a positive integer smaller than or equal to the characteristic of $\mathbb{F}_{{q^n}}$, $\beta \in \mathbb{F}_{q^n}^*$ is a fixed element, $k$ is a non-negative integer, $r_1,\ldots,r_m$ are positive divisors of $q^n-1$, $R_1,\ldots,R_m$ are divisors of $\frac{q^n-1}{r_1}, \ldots, \frac{q^n-1}{r_m}$, 
and $f,g \in \mathbb{F}_q[x]$ be monic divisors of $x^n-1$ with $\deg f=k$. 

We are interested in finding conditions for the existence of $r$-primitive elements in arithmetic progressions. For this, the following definitions play important roles.

\begin{definition}\label{def-NrfmT}
%Let $m$ be a positive integer, $r_1,r_2,\ldots,r_m$ be positive divisors of $q^n-1$, and $R_1,\ldots,R_m$ be divisors of $\frac{q^n-1}{r_1}, \ldots, \frac{q^n-1}{r_m}$, respectively. 
%Let $\beta \in \mathbb{F}_{q^n}^*$, and $f,g \in \mathbb{F}_q[x]$ be monic divisors of $x^n-1$ with $\deg f=k$. 
We denote by $N_v(R_1,\ldots,R_m,g)$ the number of pairs $(\alpha, \gamma) \in \mathbb{F}_{q^n}^* \times \mathbb{F}_{q^n}$
such that $\alpha+(i-1)\beta$ is 
$(R_i,r_i)$-free for all $i \in \{1,\ldots,m\}$ and $\alpha+(v-1)\beta$ is equal to $f \circ \gamma$ with $\gamma$ being $g$-free. 
In particular, if $N_v\left( \frac{q^n-1}{r_1}, \ldots, \frac{q^n-1}{r_m}, x^n-1 \right)>0$, then there exists at least one pair $(\alpha, \gamma) \in \mathbb{F}_{q^n}^* \times \mathbb{F}_{q^n}$ such that, 
for each $i\in \{ 1,\ldots,m\},$ the element
$\alpha+(i-1)\beta$ is $r_i$-primitive and the element $\alpha+(v-1)\beta$ is $k$-normal.
\end{definition}

\begin{definition}\label{def-N}
Assume the notation and conditions of Definition \ref{def-NrfmT}. We denote by $N(R_1,\ldots,R_m,g)$ the number of pairs $(\alpha, \gamma) \in \mathbb{F}_{q^n}^* \times \mathbb{F}_{q^n}$
such that $\alpha+(i-1)\beta$ is 
$(R_i,r_i)$-free for all $i \in \{1,\ldots,m\}$, and at least one element $\alpha+(v-1)\beta$ is equal to $f \circ \gamma$ with $\gamma$ being $g$-free.
In particular, if $N\left( \frac{q^n-1}{r_1}, \ldots, \frac{q^n-1}{r_m}, x^n-1 \right)>0$, then there exists at least one pair $(\alpha, \gamma) \in \mathbb{F}_{q^n}^* \times \mathbb{F}_{q^n}$ such that, for each $i \in \{1,\ldots,m\}$, the element $\alpha+(i-1)\beta$ is $r_i$-primitive, and at least one of them %$\alpha+(v-1)\beta$ 
is $k$-normal.
\end{definition}

The notations $N(\overline{R},g):=N(R_1,\ldots,R_m,g)$ and $N_v(\overline{R},g):=N_v(R_1,\ldots,R_m,g)$ will be used throughout the text. %, so from 
From the previous %last 
definitions we get
\begin{equation}\label{eq-N}
N(\overline{R},g) \geq \dfrac{1}{m} \sum_{v=1}^m N_v(\overline{R},g).
\end{equation}

We need to find lower bound estimates for the sum above, in order to guarantee the positivity of $N\left( \frac{q^n-1}{r_1}, \ldots, \frac{q^n-1}{r_m}, x^n-1 \right)$. We have the following result.

\begin{theorem}\label{principal}
%Let $q$ be a prime power and let $n \ge 2$ be an %a positive 
%integer.
%Let $m$ be a positive integer smaller than or equal to the characteristic of $\mathbb{F}_{{q^n}}$, $r_1,r_2,\ldots,r_m$ be positive divisors of $q^n-1$, and $R_1,\ldots,R_m$ be  divisors of $\frac{q^n-1}{r_1}, \ldots, \frac{q^n-1}{r_m}$, respectively. 
%Let $\beta \in \mathbb{F}_{q^n}^{{\color{blue}*}}$, let $f,g \in \mathbb{F}_q[x]$ be monic divisors of $x^n-1$ with $\deg f=k$, and 
Let $\widetilde{g} = \gcd(g,\frac{x^n-1}{f})$.
If 
%$M_f\left(x^n-1,\frac{q^n-1}{r} \right)>M_f(x^n-1,q^n-1)$, 
$q^{\frac{n}{2}-k} \geq m W(\widetilde{g})\prod_{i=1}^m r_iW(R_i),$
then $N(\overline{R},g)>0$. In particular, if 
\[q^{\frac{n}{2}-k} \geq m W\left( \frac{x^n-1}{f} \right)\prod_{i=1}^m r_iW \left( \frac{q^n-1}{r_i} \right),\]
then there
exists at least one pair $(\alpha, \gamma) \in \mathbb{F}_{q^n}^* \times \mathbb{F}_{q^n}$ such that, for each $i \in \{1,\ldots,m\}$, the element $\alpha+(i-1)\beta$ is $r_i$-primitive and, for at least one $v \in \{1,\ldots,m\}$, the element $\alpha+(v-1)\beta=f \circ \gamma$ is $k$-normal.%the elements in the arithmetic progression $\alpha+(i-1)\beta$ is $r_i$-primitive, with at least one element $\alpha+(j-1)\beta=f \circ \gamma$ being $k$-normal.
\end{theorem}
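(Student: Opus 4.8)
The plan is to estimate $N_v(\overline{R},g)$ from below by expanding both characteristic functions (the $(R_i,r_i)$-freeness indicator from Cohen's formula and the $g$-freeness indicator $\Omega_g$) into character sums, together with the auxiliary delta-function $I_0$ that forces $\alpha+(v-1)\beta = f\circ\gamma$. Concretely, I would write
\begin{equation}\label{eq-expand}
N_v(\overline{R},g) = \sum_{\substack{\alpha \in \mathbb{F}_{q^n}^* \\ \gamma \in \mathbb{F}_{q^n}}} \left(\prod_{i=1}^m \mathbb{I}_{R_i,r_i}(\alpha+(i-1)\beta)\right) \Omega_g(\gamma)\, I_0\bigl(\alpha+(v-1)\beta - f\circ\gamma\bigr),
\end{equation}
then substitute the integral expressions, obtaining a multiple sum over a choice of multiplicative characters $\eta_{d_1},\dots,\eta_{d_m}$ (with $d_{i,(r_i)}\mid R_ir_i$), a divisor $h\mid g$ with an additive character $\chi_h$, and an arbitrary additive character $\psi$ coming from $I_0$. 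The leading term is obtained by taking all characters trivial; it equals $\frac{\theta(R_1)\cdots\theta(R_m)}{r_1\cdots r_m}\cdot\Theta(g)\cdot q^{-n}\cdot |\{(\alpha,\gamma)\}|$, which after using Lemma~\ref{case-b} to handle the $\psi$-sum and counting $\gamma$ gives a main term of size roughly $\Theta(g)\,q^k\prod_i \theta(R_i)/r_i$ times $q^n$.

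Next I would bound the non-principal terms. After swapping the order of summation so that the inner sum is over $\alpha$ and $\gamma$, the $\gamma$-dependence appears only through $\chi_h(\gamma)\psi(f\circ\gamma)^{-1}$, so Lemma~\ref{case-b} collapses this: it is zero unless $\chi_h = f\circ\psi$, and otherwise contributes $q^n$ while pinning down $\psi$ up to $q^k$ choices and forcing $\mathrm{Ord}(\chi_h)\mid\frac{x^n-1}{f}$ — this is precisely why $g$ gets replaced by $\widetilde{g}=\gcd(g,\frac{x^n-1}{f})$ in the final bound. The remaining sum over $\alpha\in\mathbb{F}_{q^n}^*$ is of the form $\sum_\alpha \bigl(\prod_i \eta_{d_i}(\alpha+(i-1)\beta)\bigr)\psi\bigl(\alpha+(v-1)\beta\bigr)^\varepsilon$, i.e. a mixed character sum in a single variable over a product of distinct linear shifts; by Lemma~\ref{cotaparaf}(b) (with $v(x)=\prod_i (x+(i-1)\beta)^{N_i}$ and $u(x)$ linear) this is bounded by $m\,q^{n/2}$ whenever the term is genuinely non-principal, the bound $m$ coming from the $m$ distinct linear factors plus the degree-one $u$. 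Collecting the character-coefficient sums via Lemma~\ref{lemmacohen2022}, which converts $\sum_{d_i\mid R_ir_i}\frac{|\mu(d_{i,(r_i)})|}{\phi(d_{i,(r_i)})}\phi(d_i)$ into $\gcd(R_i,r_i)W(R_{i,(r_i)})$, and noting $\gcd(R_i,r_i)\le r_i$ and $W(R_{i,(r_i)})\le W(R_i)$, together with the analogous sum over $h\mid\widetilde g$ giving $W(\widetilde g)$, the total error is at most $\frac{\theta(R_1)\cdots\theta(R_m)}{r_1\cdots r_m}\,\Theta(g)\,q^k\cdot m\,q^{n/2}\cdot W(\widetilde g)\prod_i r_i W(R_i)$.

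Dividing through by the common factor $\frac{\theta(R_1)\cdots\theta(R_m)}{r_1\cdots r_m}\Theta(g)q^k$, positivity of $N_v(\overline{R},g)$ — hence of $N(\overline{R},g)$ by \eqref{eq-N} — follows once $q^{n-k}\ge m\,q^{n/2}\,W(\widetilde g)\prod_i r_i W(R_i)$, i.e. $q^{\frac n2-k}\ge m\,W(\widetilde g)\prod_{i=1}^m r_i W(R_i)$; the ``in particular'' clause is the specialization $R_i=\frac{q^n-1}{r_i}$, $g=x^n-1$ (so $\widetilde g=\frac{x^n-1}{f}$), combined with Definitions~\ref{def-NrfmT}–\ref{def-N} which translate $(\frac{q^n-1}{r_i},r_i)$-freeness into $r_i$-primitivity and $g$-freeness of $\gamma$ with $\alpha+(v-1)\beta=f\circ\gamma$ into $k$-normality via Remark~\ref{construct-k-r}. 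The main obstacle is the bookkeeping in the non-principal estimate: one must carefully isolate the genuinely principal term (all $d_i=1$, $h=1$, and $\psi$ trivial simultaneously) from the mixed cases where, say, some $\eta_{d_i}$ is trivial but $\psi$ is not — in that situation $u(x)$ is linear and non-degenerate, so Lemma~\ref{cotaparaf}(b) still applies with $D_2=D_3=D_4=0$ (or the relevant small values) and the factor $m$ in the bound is exactly $D_1+D_2+D_3+D_4-1$ with $D_1=m$; verifying that the hypotheses of Lemma~\ref{cotaparaf} (that $v$ is not a perfect $\mathrm{ord}(\chi)$-th power, resp. $u\ne r^{q^n}-r$) hold in every non-principal case, using $m\le\mathrm{char}(\mathbb{F}_{q^n})$ so the shifts $(i-1)\beta$ are pairwise distinct, is the delicate point.
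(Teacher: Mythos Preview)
Your proposal is correct and follows essentially the same route as the paper: expand $N_v$ via the characteristic functions, collapse the $\gamma$-sum with Lemma~\ref{case-b} (which forces $h\mid\widetilde g$ and contributes the factor $q^k$), bound the remaining one-variable mixed sums over $\alpha$ by Lemma~\ref{cotaparaf}, and collect coefficients via Lemma~\ref{lemmacohen2022}. One small slip: when you invoke Lemma~\ref{lemmacohen2022} you should feed it $R_i r_i$ (not $R_i$), which yields $\gcd(R_i r_i, r_i)\,W\!\bigl((R_i r_i)_{(r_i)}\bigr)=r_i\,W(R_i)$ exactly, rather than the weaker $\gcd(R_i,r_i)W(R_{i,(r_i)})\le r_i W(R_i)$ you wrote --- the endpoint is the same, so the argument goes through unchanged.
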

\begin{proof}
For $j \in \{1,\ldots,m\}$, from the characteristic functions $\mathbb{I}_{R_i,r_i}$ ($i\in \{1,\ldots,m\}$), $\Omega_{g}$, $I_0$ and Definition \ref{def-NrfmT}, we have
\begin{eqnarray}\label{eq1}
N_v(\overline{R},g) & = &
\sum_{{
			\substack{\alpha \in \mathbb{F}_{q^n} \setminus A \\
				\gamma \in \mathbb{F}_{q^n} }
	}} 
	\left( 
	\prod_{i=1}^m \mathbb{I}_{R_i,r_i}(\alpha+(i-1)\beta)\Omega_g(\gamma)  I_0\left[(\alpha+(v-1)\beta) - f \circ \gamma  \right]  
	\right) \\ \nonumber
	& = & \Theta \mathop{\idotsint}_{
	{d_i}_{(r_i)}|R_ir_i} \mathop{\int}_{h \mid g} \sum_{\psi \in \widehat{\mathbb{F}}_{q^n}} S_v(\eta_{\overline{d}}, \chi_h, \psi),
\end{eqnarray}
where $A=\{-(i-1)\beta \ | \ i \in \{1,\ldots,m\}\}%i=1,\ldots,m \}
$, $\Theta= \frac{\theta(R_1)\cdots \theta(R_m) \Theta(g)}{r_1 \cdots r_m}$ and $S_v(\eta_{\overline{d}}, \chi_h,\psi)$ is equal to 
\[
\dfrac{1}{q^n} \sum_{\alpha \in \mathbb{F}_{q^n} \setminus A}  
\eta_{d_1}(\alpha) \cdots \eta_{d_m}(\alpha+(m-1)\beta) \psi(\alpha+(v-1)\beta) \sum_{\gamma \in \mathbb{F}_{q^n}} \chi_h(\gamma) \psi^{-1}(f \circ \gamma).
\]
To find a lower bound for $N(\overline{R},g)$ we will bound 
$\left| S_v(\eta_{\overline{d}}, \chi_h, \psi) \right|$ for $j\in\{1,\ldots,m\}$. First note that, from Lemma \ref{case-b}, 
if $\psi \in \hat{f}^{-1}(\chi_h) := \{\chi \in  \widehat{\mathbb{F}}_{q^n} \mid \chi_h = f \circ \chi\}$, then%we have
%\begin{equation}\label{est1}
\[
\sum_{\gamma \in \mathbb{F}_{q^n}}  \chi_h(\gamma) \psi^{-1}(f \circ \gamma)
= q^{n}.
\]
%\end{equation}
This sum is $0$ for $\psi \notin \hat{f}^{-1}(\chi_h)$
and $\hat{f}^{-1}(\chi_h)$ is the empty set if  $\mathrm{Ord}(\chi_h) \nmid \frac{x^n-1}{f}$.
Since $\widetilde{g}=\gcd (g, \frac{x^n-1}{f})$, using Inequality \eqref{eq-N} and %the 
Equation \eqref{eq1}
we get %that
\begin{equation}\label{eq2}
N(\overline{R},g) \geq \dfrac{1}{m} \sum_{v=1}^m N_v(\overline{R},g) = \dfrac{\Theta}{m} \mathop{\idotsint}_{
	{d_i}_{(r_i)}|R_ir_i} \mathop{\int}_{h \mid \widetilde{g} } \sum_{\psi \in \hat{f}^{-1}(\chi_{h})} 
	S(\eta_{\overline{d}}, \psi),
\end{equation}
where
\[
S(\eta_{\overline{d}}, \psi)  = \sum_{\alpha \in \mathbb{F}_{q^n} \setminus A}  
\eta_{d_1}(\alpha) \cdots \eta_{d_m}(\alpha+(m-1)\beta) \sum_{v=1}^m \psi(\alpha+(v-1)\beta).
\]

To estimate the previous %this last 
sum we will consider four cases:

\begin{enumerate}

\item[(i)] We first consider the case where $\eta_{d_i}=\eta_1$ is the %are 
trivial multiplicative character for every $i\in\{1,\ldots,m\}$, and
$\psi=\psi_0$ is %a 
the trivial additive character. We obtain %, so that
\[
S(\eta_{\overline{1}}, \psi_0) = m (q^n-m).
\]

\item[(ii)] Consider the case where $\eta_{d_i}$ %are 
is the trivial multiplicative character for every $i\in\{1,\ldots,m\}$, and
$\psi$ is not %a 
the trivial additive character. Thus %, we get
\[
|S(\eta_{\overline{1}}, \psi)|  = \left| \sum_{v=1}^m \psi(\beta)^{v%j
-1} \sum_{\alpha \in \mathbb{F}_{q^n} \setminus A} \psi(\alpha) \right| = \left| \sum_{v=1}^m \psi(\beta)^{%j
v-1} \sum_{\alpha \in A}  																		\psi(\alpha) \right| \leq m^2.
\] 

Before proceeding to treat the cases where at least one multiplicative character is non-%not 
trivial, i.e., not all of $d_1,\ldots,d_m$ have the value 1 (we will denote this case by $\overline{d} \neq \overline{1}$), we %. We 
will rewrite the sum $S(\eta_{\overline{d}}, \psi)$. 
Let $\eta_0$ be a generator of the group of multiplicative characters of $\mathbb F_{q^n}^*$ (see \cite[Corollary 5.9]{LN}). As consequence, for each $i\in \{1,\ldots, m\}$ there exists $c_i \in \{0,1,\ldots , q^n-2\}$ such that $\eta_{d_i}(\alpha)=\eta_0(\alpha^{c_i})$ for all $\alpha \in \mathbb{F}_{q^n}^*$. Observe that $(c_1,\ldots , c_m) \neq \bar{0},$ since $\bar d \neq \bar 1.$ Thus we have

%It is %well-
%known (see e.g. \cite[Theorem 5.8.]{LN}) that there %exists positive integers $c_1, \ldots, c_m$ with $c_i \in \{0,1,\ldots,q^n-2\}$ such that 
%exist $c_1, \ldots, c_m \in \{0,1,\ldots,q^n-2\}$ such that 
\begin{equation}\label{eq3}
S(\eta_{\overline{d}}, \psi) = \sum_{\alpha \in \mathbb{F}_{q^n}^* \setminus A}  
\eta_{0}(s(\alpha)) \sum_{v=1}^m \psi(\alpha+(v-1)\beta),
\end{equation}
where $s(x)=x^{c_1}(x+\beta)^{c_2} \cdots (x+(m-1)\beta)^{c_m}$. %and $\eta_d$ is the {\color{red}canonical multiplicative character (definir)}.
%Observe that $(c_1,\ldots,c_m)$ is different to the null vector since $\overline{d} \neq \overline{1}$.
\item[(iii)] %From equation \eqref{eq3}, consider now the case $\overline{d} \neq \overline{1}$ and $\psi=\psi_0$ is a trivial additive character{\color{red}. We have} %, we get
Consider now the case $\overline{d} \neq \overline{1}$ and $\psi=\psi_0$ is the trivial additive character. From Equation \eqref{eq3}, it follows that %, we get
\[
|S(\eta_{\overline{d}}, \psi_0)|= m \left| \sum_{\alpha \in \mathbb{F}_{q^n}^* \setminus A}  
\eta_{d}(s(\alpha)) \right| \leq m (m-1)q^{\frac{n}{2}},
\]
where the previous %last 
inequality comes from Lemma \ref{cotaparaf}(a), since %that 
$s(x)$ is not the form $r(x)^{q^n-1}$ in $\mathbb{F}(x)$. %{\color{red}, where $\mathbb{F}$ is algebraic closure of $\mathbb{F}_{q^n}$ (tirar?)}.

\item[(iv)] Finally, consider the case $\overline{d} \neq \overline{1}$ and $\psi$ is a non-trivial %not trivial 
additive character. We use %the 
Lemma \ref{cotaparaf}(b) since %that 
$u_v(x)=x-(v-1)\beta$ is not the form $r(x)^{q^n-1}-r(x)$ for any $r(x) \in \mathbb{F}(x)$. From Equation \eqref{eq3} we have
\begin{eqnarray*}
| S(\eta_{\overline{d}}, \psi)| & = \left| \sum_{\alpha \in \mathbb{F}_{q^n}^* \setminus A}  
\eta_{d}(s(\alpha)) \sum_{v=1}^m \psi(\alpha+(v-1)\beta)  \right| \\
%& = \left| \sum_{\alpha \in \mathbb{F}_{q^n}^* \setminus A}  
%\eta_{d}(s(\alpha)) \sum_{v=1}^m \psi(u_v(x)) \right| \\
& \leq  \left| \sum_{v=1}^m \sum_{\alpha \in \mathbb{F}_{q^n}^* \setminus A}  
\eta_{d}(s(\alpha)) \psi(u_v(x)) \right| \leq m^2 q^{\frac{n}{2}}.
\end{eqnarray*}
\end{enumerate}

We may rewrite the right-hand side of Inequality %inequation 
\eqref{eq2} as
\[
\dfrac{\Theta}{m}  (S_1+S_2+S_3+S_4),
\]
where 
\[
\begin{array}{ll}
\displaystyle S_1 = S(\chi_{\bar 1},\eta_0) = m(q^n - m),  
&\displaystyle S_2 = \mathop{\int}_{h \mid \widetilde{g}} \sum_{\substack{\psi \in \hat{f}^{-1}(\chi_{h}) \\ \psi \neq \psi_0}} 
	S(\eta_{\overline{1}}, \psi), \\
\displaystyle S_3 = \mathop{\idotsint}_{ \substack{{d_i}_{(r_i)}|R_ir_i \\ \overline{d} \neq \overline{1}}} S(\eta_{\overline{d}}, \psi_0), \text { and }
&\displaystyle S_4 = \mathop{\idotsint}_{
	{d_i}_{(r_i)}|R_ir_i} \mathop{\int}_{h \mid \widetilde{g} } \sum_{\psi \in \hat{f}^{-1}(\chi_{h})} 
	S(\eta_{\overline{d}}, \psi).
\end{array}
\]

%each $S_i$ represents one of the cases considered above. 
From the previous estimates, we have %$S_1:=S(\eta_{\overline{1}}, \psi_0) = m (q^n-m)$,
\begin{eqnarray*} 
|S_2| & = & \left| \mathop{\int}_{h \mid \widetilde{g}} \sum_{\substack{\psi \in \hat{f}^{-1}(\chi_{h}) \\ \psi \neq \psi_0}} 
	S(\eta_{\overline{1}}, \psi) \right| 
	\leq  \sum_{h \mid \widetilde{g}} \dfrac{|\mu_q(h)|}{\Phi_q(h)} \sum_{(h)} \sum_{\substack{\psi \in \hat{f}^{-1}(\chi_{h}) \\ \psi \neq \psi_0}} m^2 \\
	& = & m^2(q^k-1) + \sum_{\substack{h \mid \widetilde{g} \\ h \neq 1}} \sum_{(h)} \sum_{\substack{\psi \in \hat{f}^{-1}(\chi_{h}) \\ \psi \neq \psi_0}} m^2 = m^2(q^k-1)+q^km^2(W(\widetilde{g})-1) \\
	& = & m^2 (q^k W(\widetilde{g})-1), \\
%\end{align*}
%\begin{align*}
|S_3| & = &\left|  \mathop{\idotsint}_{ \substack{{d_i}_{(r_i)}|R_ir_i \\ \overline{d} \neq \overline{1}}} S(\eta_{\overline{d}}, \psi_0) \right| \leq \underbrace{\prod_{i=1}^m \sum_{d_i | R_ir_i} \dfrac{|\mu({d_i}_{(r_i)})|}{\phi({d_i}_{(r_i)})} \sum_{(d_i)}}_{\overline{d} \neq \overline{1}} m(m-1)q^{\frac{n}{2}} \\
	& = & m(m-1)q^{\frac{n}{2}} \left( \prod_{i=1}^m \sum_{d_i | R_ir_i} \dfrac{|\mu({d_i}_{(r_i)})|}{\phi({d_i}_{(r_i)})} \phi(d_i) -\dfrac{\mu(1)}{\phi(1)}  \right) \\
	& = & m(m-1)q^{\frac{n}{2}} \left( \prod_{i=1}^m r_iW(R_i)-1 \right),
\end{eqnarray*}
where the previous %last 
equality follows from %the 
Lemma \ref{lemmacohen2022}. Finally, we have

\begin{eqnarray*}
|S_4| & = &\left|  \mathop{\idotsint}_{
	{d_i}_{(r_i)}|R_ir_i} \mathop{\int}_{h \mid \widetilde{g} } \sum_{\psi \in \hat{f}^{-1}(\chi_{h})} 
	S(\eta_{\overline{d}}, \psi) \right| \\
	& \leq & \underbrace{\prod_{i=1}^m \sum_{d_i | R_ir_i} \dfrac{|\mu({d_i}_{(r_i)})|}{\phi({d_i}_{(r_i)})} \sum_{(d_i)}}_{\overline{d} \neq \overline{1}} \sum_{h \mid \widetilde{g}} \dfrac{|\mu_q(h)|}{\Phi_q(h)} \sum_{(h)} \sum_{\substack{\psi \in \hat{f}^{-1}(\chi_{h}) \\ \psi \neq \psi_0}} m^2q^{\frac{n}{2}} \\
\end{eqnarray*}
\begin{eqnarray*}
	& = & m^2q^{\frac{n}{2}} \left( \prod_{i=1}^m \sum_{d_i | R_ir_i} \dfrac{|\mu({d_i}_{(r_i)})|}{\phi({d_i}_{(r_i)})} \phi(d_i) -1 \right)  \left( \sum_{\substack{h \mid \widetilde{g} \\ h \neq 1}} \dfrac{|\mu_q(h)|}{\Phi_q(h)} \Phi_q(h) q^k +q^k-1 \right) \\
	& = & m^2 q^{\frac{n}{2}} \left( \prod_{i=1}^m r_i W(R_i)-1 \right) \left( (W(\widetilde{g})-1)q^k+q^k-1 \right) \\
	& = & m^2 q^{\frac{n}{2}} \left( \prod_{i=1}^m r_i W(R_i)-1 \right) \left( q^kW(\widetilde{g})-1 \right)
\end{eqnarray*}

Using the bounds obtained for each of the sums, we have

\begin{eqnarray*}
\dfrac{m}{\Theta}N(\overline{R},g) & \geq & m (q^n-m) - m^2q^kW(\widetilde{g}) - m^2 - m(m-1)q^{\frac{n}{2}} \left( \prod_{i=1}^m r_iW(R_i)-1 \right)  \\
& - & m^2 q^{\frac{n}{2}} \left( \prod_{i=1}^m r_i W(R_i)-1 \right) \left( q^kW(\widetilde{g})-1 \right) \\
& > & mq^n-m^2 q^{\frac{n}{2}+k}W(\widetilde{g}) \prod_{i=1}^m r_i W(R_i).
\end{eqnarray*}

%So if
If $q^{\frac{n}{2}-k} \geq m W(\widetilde{g})\prod_{i=1}^m r_iW(R_i)$, then $N(\overline{R},g)>0$. In particular, 
taking $g=x^n-1$ and $R_i=\frac{q^n-1}{r_i}$ for $i\in\{1,\ldots,m\}$ we obtain the condition of the theorem for the existence of pairs $(\alpha, \gamma) \in \mathbb{F}_{q^n}^* \times \mathbb{F}_{q^n}$ such that %the elements in the arithmetic progression 
$\alpha+(i-1)\beta$ is $r_i$-primitive for each $i \in \{1,\ldots,m\}$, %with 
and at least one element $\alpha+(v-1)\beta=f \circ \gamma$ %being 
is $k$-normal.

\end{proof}

\subsection{The prime sieve} 
The aim of the section is to relax further the condition
of Theorem \ref{principal}. The sieving technique from the next two results is similar to others which have
appeared in previous works about primitive or normal elements.

\begin{lemma}\label{lema-sieve}
%Let $q$ be a prime power and let $n \ge 2$ be an integer. %a natural number. 
%Let $m$ be a positive integer smaller than or equal to the characteristic of $\mathbb{F}_{{q^n}}$, $r_1,r_2,\ldots,r_m$ be fixed positive divisors of $q^n-1$, and $R_1,\ldots,R_m$ be divisors of $\frac{q^n-1}{r_1}, \ldots, \frac{q^n-1}{r_m}$, respectively. 
%Let $\beta \in \mathbb{F}_{q^n}^{{\color{blue}*}}$ and $f,g \in \mathbb{F}_q[x]$ be monic divisors of $x^n-1$ with $\deg f=k$.
Let $\ell_i$ be a %square-free 
divisor of $R_i$ and let $\{p_{i,1},\ldots,p_{i,u(i)}\}$ be the set of all primes which
divide $R_i$ but do not divide $\ell_i$, for all $i\in\{1,\ldots,m\}$.
Also, %let $g \in \mathbb{F}_{q}[x]$ be divisors of $x^n-1$, and 
let $\{h_1,\ldots,h_s\}$ be the set of all monic irreducible polynomials which divide $x^n-1$ but do not divide $g$. Then, for $v\in\{1,\ldots,m\}$, we get
\begin{eqnarray}\label{desig-sieve}
N_v(\overline{R},x^n-1) & \geq & \sum_{j=1}^{u(1)} N_v(\ell_1p_{1,j},\ldots, \ell_m,g) + 
							 \sum_{j=1}^{u(2)} N_v(\ell_1,\ell_2p_{2,j},\ldots, \ell_m,g)  \\ \nonumber
					  &	+ \cdots + & \sum_{j=1}^{u(m)} N_v(\ell_1, \ldots, \ell_mp_{m,j},g) + \sum_{j=1}^s N_v(\ell_1,\ldots,											\ell_m,gh_j) \\ \nonumber
					  &	 - & (u(1)+\cdots +u(m)+s-1)N_v(\ell_1,\ldots,\ell_m,g).
\end{eqnarray}
\end{lemma}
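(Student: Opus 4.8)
The plan is to exploit a standard inclusion–exclusion (Bonferroni-type) bound. First I would reinterpret $N_v(\overline{R},x^n-1)$: counting pairs $(\alpha,\gamma)$ where $\alpha+(i-1)\beta$ is $(R_i,r_i)$-free for each $i$ and $\alpha+(v-1)\beta = f\circ\gamma$ with $\gamma$ being $(x^n-1)$-free. Being $(R_i,r_i)$-free is equivalent to being simultaneously $(\ell_i p_{i,j} , r_i)$-free for every $j\in\{1,\dots,u(i)\}$ together with being $(\ell_i,r_i)$-free (since the primes dividing $R_i$ but not $\ell_i$ are exactly the $p_{i,j}$, and $R_i$ and $\mathrm{lcm}(\ell_i, \ell_i p_{i,1},\dots,\ell_i p_{i,u(i)})$ have the same radical; $p$-freeness only depends on the radical of the modulus). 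Likewise $\gamma$ being $(x^n-1)$-free is equivalent to $\gamma$ being $gh_j$-free for all $j\in\{1,\dots,s\}$ together with being $g$-free, because $x^n-1$ and $\mathrm{lcm}(g, gh_1,\dots,gh_s)$ have the same radical. So the set we are counting is an intersection of $u(1)+\cdots+u(m)+s$ "enlarged" events, each taken inside the ambient set of pairs that are already $(\ell_i,r_i)$-free for all $i$ and $g$-free.

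Next, I would apply the elementary set inequality: if $E_1,\dots,E_N$ are subsets of a finite set $\mathcal{E}$, then
\begin{equation*}
\left| \bigcap_{t=1}^N E_t \right| \;\ge\; \sum_{t=1}^N |E_t| \;-\; (N-1)\,|\mathcal{E}|,
\end{equation*}
which follows from $|\mathcal{E}\setminus\bigcap E_t| = |\bigcup (\mathcal{E}\setminus E_t)| \le \sum |\mathcal{E}\setminus E_t|$. Here $\mathcal{E}$ is the set of pairs $(\alpha,\gamma)$ with $\alpha+(i-1)\beta$ being $(\ell_i,r_i)$-free for all $i$, $\gamma$ being $g$-free, and $\alpha+(v-1)\beta = f\circ\gamma$, so $|\mathcal{E}| = N_v(\ell_1,\dots,\ell_m,g)$. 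Taking $E_t$ to range over the $u(i)$ events "$\alpha+(i-1)\beta$ is $(\ell_i p_{i,j},r_i)$-free" (intersected with $\mathcal{E}$) and the $s$ events "$\gamma$ is $gh_j$-free" (intersected with $\mathcal{E}$), each $|E_t|$ is exactly one of the terms $N_v(\ell_1,\dots,\ell_i p_{i,j},\dots,\ell_m,g)$ or $N_v(\ell_1,\dots,\ell_m,gh_j)$, and $N = u(1)+\cdots+u(m)+s$. Substituting gives precisely Inequality \eqref{desig-sieve}.

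The main point requiring care — and the only genuine obstacle — is the first step: verifying that $(R_i,r_i)$-freeness is equivalent to the conjunction of the $(\ell_i p_{i,j},r_i)$-freeness conditions plus $(\ell_i,r_i)$-freeness, and the analogous statement on the polynomial side for $g$-freeness. Both reduce to the observation that $p$-free / $h$-free status depends only on which primes (resp. irreducible polynomials) divide the modulus, i.e. on its radical, so one just checks that $\ell_i,\ell_i p_{i,1},\dots,\ell_i p_{i,u(i)}$ collectively involve exactly the primes dividing $R_i$, and $g,gh_1,\dots,gh_s$ collectively involve exactly the irreducible divisors of $x^n-1$. Everything after that is the formal set-theoretic inequality above, which is routine.
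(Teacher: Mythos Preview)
Your proposal is correct and follows essentially the same argument as the paper's proof: the paper also verifies that any pair counted on the left-hand side is counted exactly once on the right (being counted in every positive term and with weight $u(1)+\cdots+u(m)+s-1$ in the negative term), while any other pair contributes at most zero. Your formulation via the Bonferroni-type set inequality $|\bigcap E_t| \ge \sum|E_t| - (N-1)|\mathcal{E}|$, together with the explicit remark that $(R,r)$-freeness and $g$-freeness depend only on the radical, is just a cleaner packaging of the same elementwise counting the paper carries out.
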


\begin{proof}
The left-hand side of \eqref{desig-sieve} counts 
the numbers of pairs $(\alpha, \gamma) \in \mathbb{F}_{q^n}^* \times \mathbb{F}_{q^n}$ 
such that $\alpha+(i-1)\beta$ is 
$(R_i,r_i)$-free for all $i \in \{1,\ldots,m\}$ and $\alpha+(v-1)\beta=f \circ \gamma$, with $\gamma$ 
being $k$-normal. Observe that for such pairs $(\alpha, \gamma)$ we also have that
$\alpha$ is  $(\ell_i,r_i)$-free and $(\ell_ip_{i,j},r_1)$-free for $j\in\{1,\ldots,u(i)\}$ and $i\in\{1,\ldots,m\}$, 
$\gamma$ is $g$-free and $(gh_j)$-free for $j\in\{1,\ldots,s\}.$ Therefore 
$(\alpha, \gamma)$ is counted $u(1)+\ldots +u(m)+w - (u(1)+\ldots +u(m)+w-1)=1$ times 
on the right-hand side of \eqref{desig-sieve}.

For any other 
pair $(\alpha, \gamma) \in \mathbb{F}_{q^n}^* \times \mathbb{F}_{q^n}$,
we have that either $\alpha$ is not $(\ell_ip_{i,j},r_i)$-free for some
$i \in \{ 1,\ldots , m\}$ and some $j \in \{1,\ldots,u(i)\}$ or $\gamma$ is not $gh_j$-free for some 
$j \in \{1,\ldots,w\}$ 
or $\alpha \neq f \circ \gamma$, therefore the right-hand side of \eqref{desig-sieve} is at most zero.
\end{proof}

\begin{proposition}\label{prop1-crivo}  
Assume the notation and conditions of Lemma \ref{lema-sieve}
and assume also that 
the polynomials of the set $\{h_1,\ldots,h_s\}$ divide $\frac{x^n-1}{f}$.
Let $\widetilde{g}=\gcd (g, \frac{x^n-1}{f})$,
$\delta=1-\sum_{i=1}^m \sum_{j=1}^{u(i)} \frac{1}{p_{i,j}}-\sum_{j=1}^s \frac{1}{q^{\deg(h_j)}}>0$, and $\Delta=2+\frac{u(1)+\ldots+u(m)+s-1}{\delta}$. 
If 
\begin{equation}\label{eq-prop1-crivo}
q^{\frac{n}{2}-k} \geq m \Delta W(\widetilde{g}) \prod_{i=1}^m r_iW(\ell_i),
\end{equation}
then $N(\overline{R},g)>0$.
%Assume the notation and conditions of Lemma \ref{lema-sieve}. Further, 
%if necessary, reorder the set $\{ h_1, \ldots, h_s \}$ so that $\{ h_1, \ldots, h_{s'}\}$ is the subset of polynomials which divide $\frac{x^n-1}{f}$.
%Suppose that $\delta=1-\sum_{i=1}^m \sum_{j=1}^{u(i)} \frac{1}{p_{i,j}}-\sum_{t=1}^s \frac{1}{q^{\deg(h_t)}}>0$ and let $\delta'=1-\sum_{t=s'+1}^s \frac{1}{q^{\deg(h_t)}}$ and 
%$\Delta=2+\frac{u(1)+\ldots+u(m)+s'-\delta'}{\delta}$. 
%If 
%$$
%q^{\frac{n}{2}-k} \geq m \Delta W \left( \gcd(g,\frac{x^n-1}{f}) \right) \prod_{i=1}^m r_iW(\ell_i)
%$$
%then $N(\overline{R},g)>0$.
\end{proposition}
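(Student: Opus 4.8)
Here is how I would approach the proof.

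The plan is to run a sieve‑plus‑character‑sum argument on top of Lemma~\ref{lema-sieve}. Write $C_{0}=(\ell_{1},\ldots,\ell_{m},g)$ for the base configuration, let $C_{i,j}^{(1)}$ be obtained from $C_{0}$ by replacing $\ell_{i}$ by $\ell_{i}p_{i,j}$ and $C_{j}^{(2)}$ by replacing $g$ by $gh_{j}$, and put $U=u(1)+\cdots+u(m)$. We may assume $U+s\ge 1$, since otherwise the hypothesis is exactly that of Theorem~\ref{principal}; under this assumption $\Delta\ge 2$. By \eqref{eq-N} it suffices to show that, for some $v$, the right-hand side of \eqref{desig-sieve} is positive: this gives $N_{v}(\overline{R},x^{n}-1)>0$, hence $N_{v}(\overline{R},g)\ge N_{v}(\overline{R},x^{n}-1)>0$ because an $(x^{n}-1)$-free element is $g$-free, and therefore $N(\overline{R},g)\ge\frac{1}{m}\sum_{v}N_{v}(\overline{R},g)>0$.

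Redoing the computation in the proof of Theorem~\ref{principal} with $v$ fixed, for any configuration $C=(S_{1},\ldots,S_{m},G)$ with $S_{i}\mid R_{i}$ and $G\mid x^{n}-1$ one has $N_{v}(C)=\Theta_{C}\sum_{(\overline{d},h,\psi)}\Lambda_{v}(\overline{d},h,\psi)$, where $\Theta_{C}=\frac{\theta(S_{1})\cdots\theta(S_{m})\,\Theta(G)}{r_{1}\cdots r_{m}}$, the weights $\Lambda_{v}(\overline{d},h,\psi)$ do not depend on $C$ (only the range of summation does), the sum running over the triples with $d_{i(r_{i})}\mid S_{i}r_{i}$ for all $i$, $h\mid\gcd(G,\tfrac{x^{n}-1}{f})$ and $\psi\in\hat{f}^{-1}(\chi_{h})$; the triple $(\overline{1},1,\psi_{0})$ yields the principal term $\Theta_{C}(q^{n}-m)$, while every other triple is controlled via Lemmas~\ref{case-a}, \ref{case-b} and \ref{cotaparaf} exactly as in cases (ii)--(iv) there. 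Writing $P=\prod_{i}r_{i}W(\ell_{i})$ and $Q=q^{k}W(\widetilde{g})$, this yields $N_{v}(C_{0})\ge\Theta_{C_{0}}\big((q^{n}-m)-m(Q-1)-q^{n/2}(P-1)(mQ-1)\big)$. The crucial point is to compare the shifted configurations with $C_{0}$ \emph{before} taking absolute values: the triples admissible for $C_{i,j}^{(1)}$ are those for $C_{0}$ together with the ``new'' triples whose $i$-th multiplicative component has $p_{i,j}\mid d_{i(r_{i})}$, and by Lemma~\ref{lemmacohen2022} the weighted number of these new components is $r_{i}W(\ell_{i})$ --- \emph{equal} to (not twice) the count for $C_{0}$ --- and all of them satisfy $\overline{d}\neq\overline{1}$. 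Since $\Theta_{C_{i,j}^{(1)}}=(1-1/p_{i,j})\Theta_{C_{0}}$, the difference $\varepsilon_{i,j}:=N_{v}(C_{i,j}^{(1)})/\Theta_{C_{i,j}^{(1)}}-N_{v}(C_{0})/\Theta_{C_{0}}$ is a sum over those new triples only, so $|\varepsilon_{i,j}|\le q^{n/2}P(mQ-1)$. Likewise, using $h_{j}\mid\tfrac{x^{n}-1}{f}$ and $h_{j}\nmid g$, we get $\Theta_{C_{j}^{(2)}}=(1-q^{-\deg h_{j}})\Theta_{C_{0}}$, $\gcd(gh_{j},\tfrac{x^{n}-1}{f})=h_{j}\widetilde{g}$, the new additive part has weighted count $Q$ (again not $2Q$), and $\varepsilon'_{j}:=N_{v}(C_{j}^{(2)})/\Theta_{C_{j}^{(2)}}-N_{v}(C_{0})/\Theta_{C_{0}}$ satisfies $|\varepsilon'_{j}|\le mQ\big(1+(P-1)q^{n/2}\big)$.

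Substituting the resulting identities $N_{v}(C_{i,j}^{(1)})=(1-1/p_{i,j})N_{v}(C_{0})+(1-1/p_{i,j})\Theta_{C_{0}}\varepsilon_{i,j}$, and the analogue for $C_{j}^{(2)}$, into \eqref{desig-sieve} and applying Lemma~\ref{lema-sieve}, the coefficient of $N_{v}(C_{0})$ collapses to $\sum_{i,j}(1-1/p_{i,j})+\sum_{j}(1-q^{-\deg h_{j}})-(U+s-1)=\delta$, so
$$
N_{v}(\overline{R},x^{n}-1)\ \ge\ \delta N_{v}(C_{0})+\Theta_{C_{0}}\Big(\sum_{i,j}\big(1-\tfrac{1}{p_{i,j}}\big)\varepsilon_{i,j}+\sum_{j}\big(1-q^{-\deg h_{j}}\big)\varepsilon'_{j}\Big).
$$
Since $\sum_{i,j}(1-1/p_{i,j})+\sum_{j}(1-q^{-\deg h_{j}})=U+s-1+\delta$, feeding in the bounds on $\varepsilon_{0},\varepsilon_{i,j},\varepsilon'_{j}$ makes the total error at most $\Theta_{C_{0}}\,(2\delta+U+s-1)\,mq^{n/2}PQ$ up to lower-order terms, that is $\Theta_{C_{0}}\,\delta\Delta\,mq^{n/2}PQ+O(\Theta_{C_{0}}q^{n/2})$, whence
$$
N_{v}(\overline{R},x^{n}-1)\ \ge\ \delta\Theta_{C_{0}}\big(q^{n}-\Delta\,mq^{n/2}PQ-O(q^{n/2})\big).
$$
Now \eqref{eq-prop1-crivo} reads precisely $q^{n/2}\ge\Delta mPQ$, because $PQ=q^{k}W(\widetilde{g})\prod_{i}r_{i}W(\ell_{i})$; retaining the $-1$'s accompanying $P$ and $Q$ in the error estimate and using $\Delta\ge 2$ then leaves a strictly positive quantity, so $N_{v}(\overline{R},x^{n}-1)>0$ and $N(\overline{R},g)>0$.

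The step I expect to be the main obstacle is the bookkeeping in the previous two paragraphs: one must verify carefully that passing from $C_{0}$ to a shifted configuration adjoins a block of contributing characters whose \emph{weighted} size equals, rather than doubles, the one attached to $C_{0}$ --- only then do the discounts $1-1/p_{i,j}$ and $1-q^{-\deg h_{j}}$ combine with the error terms, via the definitions of $\delta$ and $\Delta$, into exactly $2\delta+U+s-1=\delta\Delta$ and not a larger constant --- and then the lower-order contributions must be kept tightly enough to overcome the ``$-m$'' losses and conclude positivity under \eqref{eq-prop1-crivo}.
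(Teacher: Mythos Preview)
Your proof is correct and follows essentially the same route as the paper: rewrite Lemma~\ref{lema-sieve} so that each shifted term $N_v(C_{i,j}^{(1)})$ (resp.\ $N_v(C_j^{(2)})$) equals $\theta(p_{i,j})N_v(C_0)$ (resp.\ $\Theta(h_j)N_v(C_0)$) plus a character-sum remainder supported only on the ``new'' characters, bound those remainders by the case analysis of Theorem~\ref{principal}, and observe that the coefficient of $N_v(C_0)$ collapses to $\delta$ while $\sum\theta(p_{i,j})+\sum\Theta(h_j)=U+s-1+\delta$ combines with the $\delta$ from the main term to give the factor $2\delta+U+s-1=\delta\Delta$. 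The only point where you are looser than the paper is the final ``$O(q^{n/2})$'' bookkeeping for strict positivity; the paper simply bounds each remainder uniformly by $\Theta_{\ell}\,m q^{n/2+k}W(\widetilde g)\prod_i r_iW(\ell_i)$ and uses the strict inequality already obtained in the proof of Theorem~\ref{principal} for $N_v(C_0)$, which cleanly yields $N_v(\overline R,x^n-1)\ge \delta\Theta_{\ell}\bigl(q^n-m\Delta q^{n/2+k}W(\widetilde g)\prod_i r_iW(\ell_i)\bigr)>0$ under \eqref{eq-prop1-crivo}.
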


\begin{proof}
We can rewrite Inequality \eqref{desig-sieve} in the form
\begin{eqnarray}\label{eq1-crivo}
N_v(\overline{R},x^n-1) & \geq & \sum_{j=1}^{u(1)} \left[ N_v(\ell_1p_{1,j},\ldots,\ell_m,g)- \theta(p_{1,j}) 												N_v(\overline{\ell},g) \right] + \\ \nonumber
				   & + \cdots + & \sum_{j=1}^{u(m)} \left[ N_v(\ell_1,\ldots,\ell_m p_{m,j},g)- \theta(p_{m,j}) 											N_v(\overline{\ell},g) \right] + \\ \nonumber
				   & + & \sum_{j=1}^s \left[ N_v(\ell_1,\ldots,\ell_m,gh_j)- \Theta(h_j) N_v(\overline{\ell},g) \right] + 						\delta N_v(\overline{\ell},g),
\end{eqnarray}
where $N_v(\overline{\ell},g)=N_v(\ell_1,\ldots,\ell_m,g)$. For all $i \in \{1,\ldots,m\}$, let $j \in \{1,\ldots,u(i)\}$. From %the 
Definition \ref{def-NrfmT},
taking into account that $\theta$ is a multiplicative function, we get 
\begin{eqnarray*}
N&=&\dfrac{\theta(\ell_i)\theta(p_{i,j}) \Theta(g) \prod \limits_{\substack{w=1 \\ w \neq i}}^{m}\theta(\ell_w)}{r_1 r_2 \cdots r_m} \mathop{\idotsint}_{\substack{{d_i}_{(r_i)}|\ell_ip_{i,j}r_i \\
	{d_w}_{(r_w)}|\ell_wr_w, \ w \neq i}} \mathop{\int}_{h \mid g} \sum_{\psi \in \widehat{\mathbb{F}}_{q^n}} S_v(\eta_{\overline{d}}, \chi_h, \psi)\\
&= &\theta(p_{i,j}) N_v(\overline{\ell},g) + \theta(p_{i,j}) \Theta_{\ell}
\mathop{\idotsint}_{\substack{{d_i}_{(r_i)}|\ell_ip_{i,j}r_i \\ p_{i,j} | {d_i}_{(r_i)} \\ {d_w}_{(r_w)}|\ell_wr_w, \ w \neq i}  } \mathop{\int}_{h \mid g} \sum_{\psi \in \widehat{\mathbb{F}}_{q^n}} S_v(\eta_{\overline{d}}, \chi_h, \psi),
\end{eqnarray*}
%From Lemma \ref{case-b}
where $N=N_v(\ell_1,\ldots,\ell_ip_{i,j},\ldots,\ell_m,g)$ and $\Theta_{\ell}=\frac{\Theta(g)\theta(\ell_1) \cdots \theta(\ell_m)}{r_1\cdots r_m}$. From the proof of Theorem \ref{principal} (see the sum $S_4$), %denoting $\widetilde{g}=\gcd (g, \frac{x^n-1}{f})$, 
we get
%$$
%\mathop{\idotsint}_{\substack{{d_i}_{(r_i)}|k_{R_i}p_{i,j}r_i \\ p_{i,j} | {d_i}_{(r_i)} \\ {d_s}_{(r_s)}|k_{R_s}r_s, \ s \neq i}  } \mathop{\int}_{h \mid g} \sum_{\psi \in \widehat{\mathbb{F}}_{q^n}} S(\eta_{\overline{d}}, \chi_h, \psi) 
%= 
%\mathop{\idotsint}_{\substack{{d_i}_{(r_i)}|k_{R_i}p_{i,j}r_i \\ p_{i,j} | {d_i}_{(r_i)} \\ {d_s}_{(r_s)}|k_{R_s}r_s, \ s \neq i}  } \mathop{\int}_{h \mid \widetilde{g}} \sum_{\psi \in \hat{f}^{-1}(\chi_h) } S(\eta_{\overline{d}}, \chi_h, \psi).
%$$
%From the proof of Theorem \ref{principal}, we have
\[
\Big|
\mathop{\idotsint}_{\substack{{d_i}_{(r_i)}|\ell_ip_{i,j}r_i \\ p_{i,j} | {d_i}_{(r_i)} \\ {d_w}_{(r_w)}|\ell_wr_w, \ w \neq i}  } \mathop{\int}_{h \mid g} \sum_{\psi \in \widehat{\mathbb{F}}_{q^n}} S_v(\eta_{\overline{d}}, \chi_h, \psi) 
\Big|
\leq 
mq^{\frac{n}{2}+k} W(\widetilde{g})\prod_{i=1}^k r_iW(\ell_i).
\]
Then, for $i\in\{1,\ldots,m\}$ and $j\in\{1,\ldots,u(i)\}$, we have
\[
|N_v(\ell_1,\ldots,\ell_ip_{i,j},\ldots,\ell_m,g) -  \theta(p_{i,j})N_v(\overline{\ell},g)|\leq  \theta(p_{i,j}) \Theta_{\ell}
mq^{\frac{n}{2}+k} W(\widetilde{g})\prod_{i=1}^k r_iW(\ell_i).
\]
%Analogously, 
Similarly, for $j\in\{1,\ldots,s\},$ we get
\[
|N_v(\ell_1,\ldots,\ell_m,gh_j) - \Theta(h_j) N_v(\overline{\ell},g)| \leq \Theta(h_j) \Theta_{\ell} mq^{\frac{n}{2}+k} W(\widetilde{g})\prod_{i=1}^k r_iW(\ell_i).
\]
Therefore, combining the previous inequalities, we obtain
\[
N_v(\overline{R},x^n-1) \geq \delta N_v(\overline{\ell},g) - \Theta_{\ell} mq^{\frac{n}{2}+k} W(\widetilde{g})\prod_{i=1}^k r_iW(\ell_i) \left( \sum_{i=1}^m \sum_{j=1}^{u(i)} \theta(p_{i,j}) + \sum_{j=1}^{s} \Theta(h_j) \right).
\]
From the proof of Theorem \ref{principal}, we have
\begin{eqnarray*}
N_v(\overline{R},x^n-1) & \geq & \delta \Theta_{\ell} \left( q^n-mq^{\frac{n}{2}+k}W(\widetilde{g})\prod_{i=1}^k r_iW(\ell_i)\right) \\
	& &- \Theta_{\ell} mq^{\frac{n}{2}+k} W(\widetilde{g})\prod_{i=1}^k r_iW(\ell_i) \times 
	\left( \sum_{i=1}^m \sum_{j=1}^{u(i)} \theta(p_{i,j}) + \sum_{j=1}^{s} \Theta(h_j) \right) \\
	& = & \delta \Theta_{\ell} \left( q^n - m \Delta q^{\frac{n}{2}+k}W(\widetilde{g})\prod_{i=1}^k r_iW(\ell_i)  \right),
\end{eqnarray*}
hence from Inequality \eqref{eq-N} we obtain the desired result.
\end{proof}

To apply Theorem \ref{principal}, in order to obtain asymptotic results, we need (among
other results) an upper bound of $W(u)$.
%The following result was inspired by [15,
%23 Lemma 2.6].

\begin{proposition}[{\cite[Proposition 4.1]{ASN}}]\label{boundW}
	Let $e$ be a positive integer,
	$p_1, \ldots , p_e$ be the list of the first $e$ prime numbers, and $\mathcal{P}_e = p_1 \cdot \ldots \cdot p_e$
	be its product.
	For every positive integer $u \geq \mathcal{P}_e$, we have $W(u) \leq u^{\frac{1}{N}}$, where $N$ is a real number
	satisfying $\frac{1}{N} \geq \frac{e \log 2}{\log \mathcal{P}_e}$.
\end{proposition}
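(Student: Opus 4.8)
The plan is to reduce the claimed bound to a completely elementary inequality about the number of prime factors of $u$. Writing $\omega(u)$ for the number of distinct prime divisors of $u$, one has $W(u) = 2^{\omega(u)}$, since a square-free divisor of $u$ corresponds to a choice of subset of the primes dividing $u$. Taking logarithms, the inequality $W(u) \le u^{1/N}$ is equivalent to $\dfrac{\omega(u)\log 2}{\log u} \le \dfrac{1}{N}$, so in view of the hypothesis $\dfrac{1}{N} \ge \dfrac{e\log 2}{\log \mathcal{P}_e}$ it will suffice to establish
$$\frac{\omega(u)}{\log u} \le \frac{e}{\log \mathcal{P}_e}.$$
(Note that $u \ge \mathcal{P}_e \ge 2$, so $\omega(u) \ge 1$ and $\log u > 0$; no degenerate division occurs.)

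The next step is the observation that if $u$ has exactly $\omega := \omega(u)$ distinct prime factors, then $u$ is divisible by $\omega$ distinct primes, and since the least possible product of $\omega$ distinct primes is the product of the first $\omega$ primes, we get $u \ge \mathcal{P}_\omega$. Combining this with the standing hypothesis $u \ge \mathcal{P}_e$ and setting $t = \max(\omega, e)$, we obtain $u \ge \mathcal{P}_t$, hence $\log u \ge \log \mathcal{P}_t$, while $\omega \le t$; therefore $\dfrac{\omega}{\log u} \le \dfrac{t}{\log \mathcal{P}_t}$.

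It then remains to show that the sequence $j \mapsto \dfrac{j}{\log \mathcal{P}_j}$ is non-increasing, which yields $\dfrac{t}{\log \mathcal{P}_t} \le \dfrac{e}{\log \mathcal{P}_e}$ because $t \ge e$, closing the argument. After clearing denominators and writing $\log \mathcal{P}_{j+1} = \log \mathcal{P}_j + \log p_{j+1}$, the monotonicity $\dfrac{j}{\log \mathcal{P}_j} \ge \dfrac{j+1}{\log \mathcal{P}_{j+1}}$ reduces to $j\log p_{j+1} \ge \sum_{i=1}^{j}\log p_i$, which holds since each of the $j$ summands $\log p_i$ on the right is strictly smaller than $\log p_{j+1}$.

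There is essentially no serious obstacle here; the lemma is routine. The only points deserving a moment of care are: first, handling the two regimes $\omega(u) \le e$ and $\omega(u) \ge e$ uniformly, which is what the substitution $t = \max(\omega(u), e)$ accomplishes (in the regime $\omega(u) \le e$ one is simply using $u \ge \mathcal{P}_e$ directly, and in the regime $\omega(u) \ge e$ one is using $u \ge \mathcal{P}_{\omega(u)}$ together with the monotonicity); and second, recording that $u \ge \mathcal{P}_e \ge 2$ guarantees $\omega(u) \ge 1$ so that all logarithms appearing in denominators are positive.
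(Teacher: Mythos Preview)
Your proof is correct. The argument is entirely elementary and airtight: the reduction to $\dfrac{\omega(u)}{\log u}\le \dfrac{e}{\log \mathcal P_e}$ via the hypothesis on $N$ is clean, the minimality bound $u\ge \mathcal P_{\omega(u)}$ is the right observation, and the monotonicity of $j\mapsto j/\log\mathcal P_j$ is verified correctly.

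As for comparison with the paper: note that the paper does not actually supply a proof of this proposition---it is quoted verbatim as \cite[Proposition~4.1]{ASN} and used as a black box. So there is no ``paper's own proof'' to compare against here. Your self-contained argument is exactly the standard one and would serve perfectly well as the omitted proof.
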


\begin{proposition}\label{cotainicial}
Let $N$ be a positive integer such that $\frac{1}{2}-\frac{1}{N} - \log_q 2 >0$ and
let $e$  be a positive integer such that
$\frac{1}{Nm} \ge \frac{e \log 2}{\log \mathcal{P}_e}$
and $\frac{q^n-1}{r_i} \ge \mathcal{P}_e$ for all $i \in \{ 1,\ldots, m\}$.
Suppose that
\begin{equation}\label{condicaobizarra}
q^{n
\left(
\frac{1}{2} - \frac{1}{N} - \log_q 2
\right) - k
} 
\ge \frac{m (r_1 \cdots r_m)^{1 - \frac{1}{Nm}}}{2^k}.
\end{equation}
Then for all $\beta \in \mathbb{F}_{q^n}^*,$
there exists an element $\alpha\in \mathbb{F}_{q^n}^*$
for which, for every $i\in \{ 1,\ldots,m\}$,
the element
$\alpha+(i-1)\beta$ is $r_i$-primitive
and, for some $v \in \{ 1,\ldots,m\}$, the element $\alpha+(v-1)\beta$ is $k$-normal
over $\mathbb{F}_q.$
\end{proposition}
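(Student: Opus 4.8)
The plan is to derive Proposition \ref{cotainicial} from Theorem \ref{principal} by showing that Condition \eqref{condicaobizarra} implies the sufficient condition
$q^{n/2-k} \ge m\,W\!\left(\frac{x^n-1}{f}\right)\prod_{i=1}^m r_i W\!\left(\frac{q^n-1}{r_i}\right)$
that guarantees $N\left(\frac{q^n-1}{r_1},\ldots,\frac{q^n-1}{r_m},x^n-1\right)>0$. The only quantities in that bound that are not fully explicit are the two types of $W$-factors, so the core of the argument is to bound each of them from above. First I would handle the polynomial factor: since $f\mid x^n-1$ with $\deg f=k$, we have $\frac{x^n-1}{f}\mid x^n-1$, and the number of monic squarefree divisors of $x^n-1$ over $\mathbb{F}_q$ is at most the number of monic divisors, which is crudely at most $2^n$; more to the point, a cleaner bound is $W\!\left(\frac{x^n-1}{f}\right)\le q^{\deg((x^n-1)/f)} \le \dots$, but in fact the estimate actually needed is the trivial $W(t)\le 2^{\deg t}$ applied with $\deg t = n-k$, giving $W\!\left(\frac{x^n-1}{f}\right)\le 2^{\,n-k}$. (If the paper instead wants $W$ of the \emph{integer} $x^n-1$ evaluated at... no --- here it is genuinely the polynomial $W$, and $2^{n-k}$ is the bound to use, matching the $2^{-k}$ and $2^{\log_q 2}$ bookkeeping on the right of \eqref{condicaobizarra}.)

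Next I would bound the multiplicative factor $\prod_{i=1}^m r_i W\!\left(\frac{q^n-1}{r_i}\right)$. Here the hypothesis $\frac{q^n-1}{r_i}\ge \mathcal{P}_e$ together with $\frac{1}{Nm}\ge \frac{e\log 2}{\log \mathcal{P}_e}$ lets me invoke Proposition \ref{boundW} with exponent $\frac{1}{Nm}$, yielding $W\!\left(\frac{q^n-1}{r_i}\right)\le \left(\frac{q^n-1}{r_i}\right)^{1/(Nm)}\le q^{n/(Nm)} r_i^{-1/(Nm)}$. Multiplying over $i\in\{1,\ldots,m\}$ gives
$\prod_{i=1}^m r_i W\!\left(\frac{q^n-1}{r_i}\right)\le q^{n/N}\prod_{i=1}^m r_i^{\,1-1/(Nm)} = q^{n/N}\,(r_1\cdots r_m)^{1-1/(Nm)}$,
where in the last step I use that $\prod_i r_i^{-1/(Nm)}=(r_1\cdots r_m)^{-1/(Nm)}$ only when the $r_i$ share the exponent --- which they do, since $1/(Nm)$ is the same for all $i$; care is needed here because $(r_1\cdots r_m)^{1-1/(Nm)}\ne \prod_i r_i^{1-1/(Nm)}$ in general, so I would instead keep it as $\prod_i r_i^{1-1/(Nm)}$ and note this is $\le (r_1\cdots r_m)^{1-1/(Nm)}$ precisely because each $r_i\ge 1$ and $1-1/(Nm)<1$... actually equality-up-to-rearrangement: $\prod_i r_i^{1-1/(Nm)} = (r_1\cdots r_m)(r_1\cdots r_m)^{-1/(Nm)}\cdot\frac{\prod r_i}{\prod r_i}$ --- this is exactly $(r_1\cdots r_m)^{1-1/(Nm)}$ since the exponent distributes over products. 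So the bound is clean: $\prod_{i=1}^m r_i W\!\left(\frac{q^n-1}{r_i}\right)\le q^{n/N}(r_1\cdots r_m)^{1-1/(Nm)}$.

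Assembling the pieces, the sufficient condition of Theorem \ref{principal} is implied by
$q^{n/2-k}\ge m\cdot 2^{\,n-k}\cdot q^{n/N}(r_1\cdots r_m)^{1-1/(Nm)}$,
i.e. by $q^{n/2-k-n/N}\,2^{-(n-k)}\ge m (r_1\cdots r_m)^{1-1/(Nm)}$, which after writing $2^{-n}=q^{-n\log_q 2}$ rearranges to exactly $q^{n(1/2-1/N-\log_q 2)-k}\ge \frac{m(r_1\cdots r_m)^{1-1/(Nm)}}{2^k}$ --- this is Condition \eqref{condicaobizarra}. Then Theorem \ref{principal} gives a pair $(\alpha,\gamma)\in\mathbb{F}_{q^n}^*\times\mathbb{F}_{q^n}$ with $\alpha+(i-1)\beta$ being $r_i$-primitive for each $i$ and $\alpha+(v-1)\beta=f\circ\gamma$ being $k$-normal for some $v$, which is the assertion. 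The positivity hypothesis $\frac12-\frac1N-\log_q 2>0$ is used only to ensure the exponent of $q$ on the left is (for fixed $k$) growing, so that the inequality is satisfiable; I would remark that it is really needed to make \eqref{condicaobizarra} non-vacuous. The main obstacle I anticipate is purely bookkeeping: making sure the exponent $\frac{1}{Nm}$ in Proposition \ref{boundW} is applied to each $\frac{q^n-1}{r_i}$ with the hypotheses ($\mathcal P_e$-threshold and the $e\log2/\log\mathcal P_e$ inequality) correctly matched, and tracking the split of $2^{-(n-k)}$ into the $2^{-k}$ that stays on the right and the $q^{-n\log_q 2}$ that merges into the exponent on the left --- no single step is deep, but an off-by-one in these exponents would break the reduction.
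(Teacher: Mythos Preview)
Your approach is correct and essentially identical to the paper's own proof: bound $W\!\left(\frac{x^n-1}{f}\right)\le 2^{n-k}$, apply Proposition~\ref{boundW} with exponent $\tfrac{1}{Nm}$ to each $\frac{q^n-1}{r_i}$, multiply, and rearrange to see that \eqref{condicaobizarra} is equivalent to the hypothesis of Theorem~\ref{principal}. The only blemish is the digression about whether $\prod_i r_i^{\,1-1/(Nm)}$ equals $(r_1\cdots r_m)^{1-1/(Nm)}$: these are trivially equal for positive reals since exponentiation distributes over products, so the hesitation is unwarranted and should be cut from a clean write-up.
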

\begin{proof}
From Proposition \ref{boundW}, we have
$W(\frac{q^n-1}{r_i})\le \left( \frac{q^n}{r_i}\right)^{\frac{1}{Nm}}$
for all $i \in \{ 1,\ldots,m\}$.
%We will also use Proposition \ref{prop1-crivo} with $\ell_i=R_i=\frac{q^n-1}{r_i}$ for $i\in \{ 1,\ldots,m\}$ and $g=f$. Let $p$ be the characteristic of $\mathbb{F}_q$. If 
%$p \nmid n$, then
%$\widetilde{g}= \gcd \left(g,\frac{x^n-1}{f} \right)=1$, otherwise
%$W(\widetilde{g})\le 2^{k}$.
%Let $\{h_1,\ldots,h_s\}$ be the set of all monic irreducible polynomials which divide $x^n-1$ but do not divide $f$. If $p \nmid n$, then $s\le n-k$, otherwise
%$s \le \frac{n}{p}\le \frac{n}{2}< n-k$.
%Then $\delta=1-\sum_{j=1}^s \frac{1}{q^{\deg(h_s)}} \geq 1- \frac{n-k}{q}$.
% \geq 
%1-\frac{1}{n-k}>0$, since $s\leq n-k$ and the hypotheses $(n-k)^2 \leq q$.
%
%
%
In this case, we have
\[
m W\left( \frac{x^n-1}{f} \right)\prod_{i=1}^m r_iW \left( \frac{q^n-1}{r_i} \right)
\le m \cdot 2^{n-k} \cdot \left( r_1 \cdots r_m \right)^{1-\frac{1}{Nm}}
\cdot q^{\frac{n}{N}}.
\]
Observe that Inequality \eqref{condicaobizarra} is equivalent to
$q^{\frac{n}{2}-k} \ge m \cdot (r_1 \cdots r_m)^{1 - \frac{1}{Nm}} \cdot 2^{n-k} \cdot q^{\frac{n}{N}}.$
Combining both inequalities, we have
\[
q^{\frac{n}{2}-k} \ge m W\left( \frac{x^n-1}{f} \right)\prod_{i=1}^m r_iW \left( \frac{q^n-1}{r_i} \right)
\]
and we get the desired result from Theorem \ref{principal}.
\end{proof}

\section{Arithmetic progressions with $r_i=2$ and $k=2$}

In this section, we are going to deal with the particular case
where $m =3$, $r_i=2$ for $i \in \{ 1,\ldots, m\}$ and $k=2$. All the procedures and numerical calculations are done using SageMath \cite{SAGE}.

Let $A$ be the set of pairs $(q,n)$ such that there exist $2$-primitive elements $\alpha, \alpha + \beta, \alpha+2\beta \in \mathbb{F}_{q^n}$ with at least one of them being $2$-normal. From \cite[Lemma 3.1]{AN}, if
$(q,n)\in A$, then $q$ is odd and $\gcd(q^3 -q,n)\ne 1$.
From now on, we will assume these two conditions hold.

In order to study the existence of such elements, we 
need to bound the function $W(x^n - 1).$

\begin{lemma}[{\cite[Lemma 4.3]{ASN}}]\label{cota-xn-1}
Let $n\ge 5$ be a positive integer and let $f$ be a quadratic factor of $x^n-1$
over $\mathbb{F}_q$. The number of monic
irreducible factors of
$\dfrac{x^n-1}{f}$ over $\mathbb{F}_q$ is at most 
$n-2$ or
$\frac{n}{a}+b-1,$ where
the pair $(a,b)$ can be chosen among the following pairs:
\[
\left(2,\frac{q-1}2\right), \quad \left(3, \frac{q^2+3q-4}{6}\right),\quad  
\left(4,\frac{q^3+3q^2+5q-9}{12}\right).
\]
\end{lemma}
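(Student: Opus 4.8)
The plan is to count the monic irreducible factors of $x^n-1$ over $\mathbb{F}_q$ by grouping them according to the degrees of the cyclotomic cosets of $q$ modulo $n$, and then subtract the contribution of the quadratic factor $f$. Write $n = p^a n'$ where $p$ is the characteristic and $\gcd(p,n')=1$; the distinct irreducible factors of $x^n-1$ are exactly the distinct irreducible factors of $x^{n'}-1$, and these are in bijection with the $q$-cyclotomic cosets modulo $n'$. A coset containing an element of multiplicative order $e$ (with $e \mid n'$) has size equal to $\ord_e(q)$, the multiplicative order of $q$ mod $e$, and there are $\phi(e)/\ord_e(q)$ such cosets. Summing over $e \mid n'$ gives the exact number of irreducible factors; the crude bound ``at most $n-1$ factors, hence at most $n-2$ after removing a quadratic factor'' handles the generic case, so the real content is the three refined estimates.

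For the refined bounds I would isolate, for each choice $(a,b) \in \{(2,\tfrac{q-1}{2}),(3,\tfrac{q^2+3q-4}{6}),(4,\tfrac{q^3+3q^2+5q-9}{12})\}$, the divisors $d$ of $n$ for which $\ord_d(q) \ge a$ versus those for which $\ord_d(q) < a$. Divisors $d$ with $\ord_d(q) \ge a$ contribute at most $\phi(d)/a \le d/a$ irreducible factors, and after summing over all $d \mid n$ this block contributes at most $n/a$ (roughly). The finitely many ``bad'' divisors $d$ with $\ord_d(q) \in \{1,\dots,a-1\}$ are precisely the $d$ dividing $q^j-1$ for some $j < a$; these $d$ are bounded (they divide $\mathrm{lcm}(q-1,\dots,q^{a-1}-1)$, which divides a fixed power related to $q^{a-1}-1$), and the total number of irreducible factors they produce is at most $\sum_{j=1}^{a-1}\tfrac{1}{j}\sum_{d \mid \gcd(n,\,q^j-1)}\phi(d)$, which one estimates by $\sum_{j<a}\frac{q^j-1}{j}$ or the sharper $\gcd$-based count. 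Adding the two blocks and carefully subtracting $1$ for having removed the quadratic $f$ (whose cyclotomic coset has size $2$, i.e.\ corresponds to some $d$ with $\ord_d(q)=2$) yields the quantity $\frac{n}{a}+b-1$ after the arithmetic in $b$ is tallied: e.g.\ for $a=2$ the bad divisors are those dividing $q-1$, contributing $\sum_{d\mid\gcd(n,q-1)}\phi(d) \le q-1$ factors of degree $1$, which after the bookkeeping collapses to the term $b = \tfrac{q-1}{2}$.

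The main obstacle is bookkeeping rather than any deep idea: one must be careful that (i) a quadratic factor $f$ really does correspond to a coset of size $2$, so its removal legitimately decreases the count by exactly $1$, and that such an $f$ exists only when there is a $d \mid n$ with $\ord_d(q)=2$ — but this is guaranteed by the hypothesis that $f$ is given; (ii) the estimates $\phi(d)/\ord_d(q)$ are summed without double-counting, since distinct $d$ give disjoint sets of cosets; and (iii) the closed-form expressions for $b$ come out exactly as stated, which requires computing $\sum_{j=1}^{a-1}\frac{1}{j}\binom{\text{sum of }\phi(d)\text{ over }d\mid q^j-1}{}$ and comparing with $\frac{q^j-1}{j}$ — here one uses $\sum_{d\mid N}\phi(d)=N$ to get the bound $q^j-1$ and then divides appropriately. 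Since this lemma is quoted verbatim from \cite[Lemma 4.3]{ASN}, in the present paper it suffices to cite that source; the sketch above is the argument one would reconstruct if a self-contained proof were demanded.
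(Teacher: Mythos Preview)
The paper does not prove this lemma; it is imported verbatim from \cite[Lemma 4.3]{ASN}, exactly as you observe at the end of your proposal. Your reconstruction via $q$-cyclotomic cosets and the split according to whether $\ord_d(q)\ge a$ or $\ord_d(q)<a$ is the right idea, but one point in the bookkeeping needs tightening: bounding the ``good'' block by $n/a$ and then \emph{adding} the bad block on top is too crude to recover the stated values of $b$. The sharper count writes the total number of distinct irreducible factors of $x^{n'}-1$ as
\[
\sum_{d\mid n'} \frac{\phi(d)}{\ord_d(q)}
\;\le\;
\frac{1}{a}\Bigl(n'-\sum_{j<a}L_j\Bigr) + \sum_{j<a}\frac{L_j}{j}
\;=\;
\frac{n'}{a} + \sum_{j<a}\Bigl(\frac{1}{j}-\frac{1}{a}\Bigr)L_j,
\]
where $L_j=\sum_{\ord_d(q)=j}\phi(d)$, and then bounds the correction term using $\sum_{\ord_d(q)\mid j}\phi(d)=\gcd(n',q^j-1)\le q^j-1$. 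For $a=2$ this gives $\tfrac{1}{2}L_1\le\tfrac{q-1}{2}$; for $a=3$ one gets $\tfrac{2}{3}L_1+\tfrac{1}{6}L_2=\tfrac{1}{6}\bigl(3L_1+(L_1+L_2)\bigr)\le\tfrac{1}{6}\bigl(3(q-1)+(q^2-1)\bigr)=\tfrac{q^2+3q-4}{6}$; and for $a=4$ the identity $9L_1+3L_2+L_3=5L_1+3(L_1+L_2)+(L_1+L_3)$ together with $L_1+L_3\le q^3-1$ yields $\tfrac{q^3+3q^2+5q-9}{12}$. With this refinement, and after subtracting $1$ for the removed quadratic factor $f$, your sketch produces exactly the stated bounds.
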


\begin{proposition}\label{firstbound}
%Let $q$ be an odd prime power and $n$ a positive integer. 
If $n \geq13$ and $q^n \geq 4.12 \cdot 10^{718}$, then $(q,n) \in A.$
\end{proposition}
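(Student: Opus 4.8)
The plan is to specialize the general sufficient condition of Theorem~\ref{principal} to the case $m=3$, $r_1=r_2=r_3=2$, $k=2$, and then deal with the two families of bounds for $W\bigl(\frac{x^n-1}{f}\bigr)$ coming from Lemma~\ref{cota-xn-1}. By Theorem~\ref{principal} (with $g=x^n-1$, so $\widetilde g=\frac{x^n-1}{f}$, and $R_i=\frac{q^n-1}{r_i}$), it suffices to verify
\[
q^{\frac{n}{2}-2} \;\ge\; 3 \cdot W\!\left(\frac{x^n-1}{f}\right) \cdot 2^3 \cdot W\!\left(\frac{q^n-1}{2}\right)^3 .
\]
First I would bound the multiplicative part: using Proposition~\ref{boundW} with a suitable $e$ (so that $\frac{1}{N}\ge \frac{e\log 2}{\log\mathcal P_e}$ and $\frac{q^n-1}{2}\ge \mathcal P_e$, which holds in the stated range since $q^n$ is astronomically large), one gets $W\!\left(\frac{q^n-1}{2}\right)\le \left(\frac{q^n-1}{2}\right)^{1/N}\le q^{n/N}$ for an $N$ one can push close to the value dictated by the first few primes; the threshold $q^n\ge 4.12\cdot 10^{718}$ is precisely what lets us choose $e$ large enough that $1/N$ is small (numerically $N$ will be on the order of a couple of dozen). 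So the multiplicative contribution is at most $8\, q^{3n/N}$, and one needs $q^{\frac{n}{2}-2-\frac{3n}{N}}\ge 24\, W\!\left(\frac{x^n-1}{f}\right)$.

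Next I would handle $W\!\left(\frac{x^n-1}{f}\right)$, which is $2^\omega$ where $\omega$ is the number of distinct monic irreducible factors of $\frac{x^n-1}{f}$ over $\mathbb F_q$. By Lemma~\ref{cota-xn-1}, for $n\ge 5$ we have $\omega\le n-2$, so $W\!\left(\frac{x^n-1}{f}\right)\le 2^{n-2}$, and the condition reduces to $q^{\frac{n}{2}-\frac{3n}{N}}\ge 24\cdot 2^{n-4} = \frac{3}{2}\cdot 2^{n}$, i.e.\ to $\bigl(\frac12-\frac3N\bigr)n\log q \ge n\log 2 + \log\frac32$. This holds once $\log q$ is large compared to $\log 2$ and $\frac12-\frac3N-\log_q 2>0$; the point is that $q^n$ large forces $q$ or $n$ large, and I would split into two subcases. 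If $q$ is large (say $\log_q 2$ small enough that $\frac12-\frac3N-\log_q2$ exceeds a fixed positive constant), the inequality is immediate. If instead $q$ is small but then $n$ must be enormous (to make $q^n\ge 4.12\cdot10^{718}$), I would use the sharper alternative bound $\omega\le \frac{n}{a}+b-1$ from Lemma~\ref{cota-xn-1} with $(a,b)=\bigl(2,\frac{q-1}{2}\bigr)$ (or $(3,\cdot)$, $(4,\cdot)$ for the very smallest $q$), which replaces the factor $2^{n-2}$ by roughly $2^{n/2}\cdot 2^{(q-1)/2}$; since $n/2\log q - (n/2)\log 2$ is then a positive multiple of $n$ that dwarfs the fixed term $(q-1)/2\cdot\log 2$, the inequality again goes through.

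The main obstacle is the bookkeeping at the boundary: one must check that a \emph{single} choice of $e$ (hence $N$) works uniformly across the whole region $n\ge 13$, $q^n\ge 4.12\cdot10^{718}$, while simultaneously the term $2^{n-2}$ (or its sharper replacement) from $W\!\left(\frac{x^n-1}{f}\right)$ does not overwhelm $q^{n/2-2}$. This forces $q$ not to be too small relative to a constant, and exactly there the second bound of Lemma~\ref{cota-xn-1} must be invoked; I expect the proof to pin down a cutoff like $q\ge q_0$ for some explicit small $q_0$ where the first (linear) bound suffices, and to treat $q< q_0$ separately (for those finitely many $q$, $n$ is forced to be huge, and the $\frac{n}{a}+b-1$ bound wins with room to spare). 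Carefully optimizing $N$ against this constant $q_0$ is the one genuinely delicate numerical step; everything else is routine substitution into Theorem~\ref{principal} and Proposition~\ref{boundW}.
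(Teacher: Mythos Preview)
Your plan is exactly the paper's: specialize Theorem~\ref{principal}, bound $W\bigl(\tfrac{q^n-1}{2}\bigr)^3$ via Proposition~\ref{boundW}, and split into a large-$q$ regime (using $W\bigl(\tfrac{x^n-1}{f}\bigr)\le 2^{n-2}$) versus a small-$q$ regime handled by the $(a,b)$-bounds of Lemma~\ref{cota-xn-1}. One correction to your numerics: the hypothesis $q^n\ge 4.12\cdot 10^{718}\approx 2\mathcal P_{265}$ only permits $e=265$, which gives (in your notation) $N\approx 9$, not ``a couple of dozen''; consequently $\tfrac12-\tfrac{3}{N}=\tfrac16$, the large-$q$ cutoff lands near $q_0\approx 2^6$ (the paper takes $q\ge 79$), and the remaining ranges $7<q<79$, $q\in\{5,7\}$, $q=3$ are disposed of with $(a,b)=(2,\cdot),(3,\cdot),(4,\cdot)$ respectively---exactly as you anticipate.
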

\begin{proof}
Note that $\frac{1}{9} \ge \frac{e \log 2}{\log \mathcal{P}_e}$
is satisfied for $e\ge 265$. From Proposition \ref{cotainicial} with $N=3$ and
$e=265$, we have that 
if
$q^n \ge 4.12\cdot 10^{718} > 2 \cdot \mathcal{P}_{e}$ and
\begin{equation}\label{bizarra-exemplo}
q^{n
	\left(
	\frac{1}{6} - \log_q 2
	\right) - 2
} 
\ge 3 \cdot 2^{\frac{2}{3}},
\end{equation}
then $(q,n)\in A$.
Applying logarithms and
using the fact $q^n \geq 4.12\cdot 10^{718}$, the inequality
\begin{equation}\label{bizarro2}
\frac{1}{6}- \frac{2}{n}- \frac{\log 2}{\log q} \geq \frac{\log (3 \cdot 2^{\frac{2}{3}})}{\log (4.12\cdot 10^{718})}
\end{equation}
implies Inequality \eqref{bizarra-exemplo}.
We define $y(n,q):=\frac{1}{6}- \frac{2}{n}- \frac{\log 2}{\log q}$ and we study
the inequality
$y(n,q) \geq 0.00095 > \frac{\log (3 \cdot 2^{\frac{2}{3}})}{\log (4.12\cdot 10^{718})}$. 
Note that $(2 \mathcal{P}_e)^{\frac{1}{n}} \le 307$ if and only if
$n \ge \frac{\log (2\mathcal{P}_e)}{\log 307}$.
Since $288.94 > \frac{\log (2\mathcal{P}_e)}{\log 307}$, if 
$n \geq 289$ and $q \geq 307$, 
then $q \ge (2 \mathcal{P}_e)^{\frac{1}{n}}$ and
$y(q,n) \geq \frac{1}{6}- \frac{2}{289}- \frac{\log 2}{\log 307} \ge 00095$, which implies that $(q,n) \in A$. 

Suppose $q \geq 79$, from the condition $q^n \geq 2\mathcal{P}_e$ we have $\log q \geq \max \{ \frac{\log 2\mathcal{P}_e}{n}, \log 79 \}$ and 
\[
y(n,q) \geq z(n), \ \ \ \text{where} \ \ \ 
z(n)= \left\{ \begin{array}{lll}
             \dfrac{1}{6}-\dfrac{2}{n}-\dfrac{n\log 2}{\log 2\mathcal{P}_e} & \text{if} & n< \dfrac{\log 2\mathcal{P}_e}{\log 79} \\
              \dfrac{1}{6}-\dfrac{2}{n}-\dfrac{\log 2}{\log 79}& \text{if} & n \geq \dfrac{\log 2\mathcal{P}_e}{\log 79}
             \end{array}
   \right. .
\]
If $ n \geq \frac{\log 2\mathcal{P}_e}{\log 79}$, we have $y(n,q) \geq z(n) \geq 0.00095$, since $z\left(\frac{\log 2\mathcal{P}_e}{\log 79} \right) \geq 0.00095$ and $z(n)$ is an increasing function. Now the inequality $y(n,q) \geq z(n) \geq 0.00095$ is valid for
$13 \leq n\le 377 < \dfrac{\log 2\mathcal{P}_e}{\log 79}$.

Using Lemma \ref{cota-xn-1}, we can use the bound $W(x^n-1) \leq 2^{\frac{n}{a}+b}$
to get $W(\frac{x^n-1}{f}) \leq 2^{\frac{n}{a}+b-1}.$ This implies
\[
3\cdot  W\left( \frac{x^n-1}{f} \right)\cdot 2^3  \cdot W \left( \frac{q^n-1}{2} \right)^3
\le 3 \cdot 2^{\frac{n}{a}+b-1} \cdot \left( 2^3 \right)^{1-\frac{1}{9}}
\cdot q^{\frac{n}{3}}.
\]
Thus, from Theorem \ref{principal}, if
\[
q^{n
	\left(
	\frac{1}{6} - \frac{1}{a}\log_q 2
	\right) - 2
} 
\ge 3 \cdot 2^{b+\frac{5}{3}},
\]
then $(q,n)\in A$. This allows us to rewrite Inequality \eqref{bizarro2} as
\begin{equation}\label{cond-mod1}
\frac{1}{6}- \frac{2}{n}- \frac{\log 2}{a \log q} \geq \frac{\log \left(3 \cdot 2^{b+ \frac{5}{3}} \right)}{\log 2 \mathcal{P}_e}.
\end{equation}
We will use this %last 
condition to decrease the bound of $q$. Suppose $q<79$, from Lemma \ref{cota-xn-1} with $a=2$ and $b=q-1$, Inequality \eqref{cond-mod1} holds for $q>7$. For $q \in \{5,7\}$, Inequality \eqref{cond-mod1} holds
with $a=3$ and $b=\frac{q^2+3q-4}{6}$. Finally, for $q=3$, Inequality \eqref{cond-mod1} holds with $a=4$ and $b=\frac{q^3+3q^2+5q-9}{12}$. This completes the proof.
\end{proof}

\begin{proposition}\label{bound2}
If $n \geq13$ and $q \geq 79$, then $(q,n) \in A.$
\end{proposition}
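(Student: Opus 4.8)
The plan is to combine Proposition~\ref{firstbound} with a sieved version of Theorem~\ref{principal} (namely Proposition~\ref{prop1-crivo}) in order to cover the remaining range. By Proposition~\ref{firstbound}, the only cases left to treat under the hypotheses $n \ge 13$ and $q \ge 79$ are those with $q^n < 4.12 \cdot 10^{718}$; since $q \ge 79$, this forces $n$ to lie in a bounded range, roughly $13 \le n \le 377$ as already observed in the proof of Proposition~\ref{firstbound}. Thus I would first reduce to a finite (though very large) list of pairs $(q,n)$, and then show the sieve inequality \eqref{eq-prop1-crivo} holds for each of them.

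To run the sieve, for each admissible $(q,n)$ I would choose a quadratic factor $f$ of $x^n-1$ over $\mathbb{F}_q$ (which exists because $\gcd(q^3-q,n)\ne 1$ guarantees a suitable cyclotomic structure), set $r_1=r_2=r_3=2$, and pick the core divisors $\ell_1,\ell_2,\ell_3$ of $\frac{q^n-1}{2}$ together with a core polynomial $g \mid \frac{x^n-1}{f}$. The natural choice is to let $\ell_i$ be the product of the smallest primes dividing $\frac{q^n-1}{2}$ and $g$ the product of the lowest-degree irreducible factors of $\frac{x^n-1}{f}$, sieving out the remaining prime and polynomial factors via the $p_{i,j}$ and $h_j$. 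One must verify $\delta = 1 - \sum_{i,j}\frac{1}{p_{i,j}} - \sum_j \frac{1}{q^{\deg h_j}} > 0$, compute $\Delta = 2 + \frac{u(1)+u(2)+u(3)+s-1}{\delta}$, and then check
\[
q^{\frac{n}{2}-2} \ge 3\,\Delta\, W(\widetilde{g}) \prod_{i=1}^{3} 2\,W(\ell_i),
\]
where $\widetilde{g} = \gcd(g, \frac{x^n-1}{f}) = g$. Here the bound $W(x^n-1) \le 2^{n/a+b}$ from Lemma~\ref{cota-xn-1} is used to control $W(\widetilde{g})$ (or a direct factorization when $n$ is small enough), and $\frac{q^n-1}{2}$ is factored (or bounded) to control the $W(\ell_i)$.

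The main obstacle is the arithmetic bookkeeping over the wide range of $n$: for fixed $q$, as $n$ grows the left side $q^{n/2-2}$ grows geometrically while the right side grows only like a power of $2$ times $\Delta$, so large $n$ is easy; the delicate cases are the small and moderate $n$ where $q^n-1$ may have many small prime factors, forcing $\prod W(\ell_i)$ up and, if too many primes must be sieved, driving $\delta$ toward $0$ and $\Delta$ up. In those cases one has to be cleverer about which primes to absorb into $\ell_i$ versus sieve out, possibly iterating the choice, and occasionally to fall back on a direct SageMath verification of positivity of $N(\overline{R},x^n-1)$ for the handful of genuinely stubborn pairs. I would therefore structure the proof as: (1) invoke Proposition~\ref{firstbound} to bound $n$; (2) state the sieve setup and the quantities $\delta$, $\Delta$; (3) split into ``large $q$ or large $n$'' where \eqref{eq-prop1-crivo} follows from the crude bounds of Lemma~\ref{cota-xn-1} and Proposition~\ref{boundW}, and (4) a finite residual list handled by explicit computation, concluding $(q,n) \in A$ in all cases.
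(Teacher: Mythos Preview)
Your overall plan matches the paper's: invoke Proposition~\ref{firstbound} to bound $q^n$, then apply the sieve of Proposition~\ref{prop1-crivo} to the residual range. Two tactical differences are worth noting. First, the paper does \emph{not} sieve on polynomial factors at all: it simply takes $g = x^n - 1$ (hence $s = 0$ and $\widetilde{g} = \frac{x^n-1}{f}$, bounded via $W(\widetilde{g}) \le 2^{n-2}$), and sieves only on the integer primes of $\frac{q^n-1}{2}$ above a cutoff $p_0$; your extra polynomial sieve is permissible but turns out to be unnecessary here. Second, rather than treating all $13 \le n \le 378$ at once, the paper iterates the prime sieve with a descending sequence of cutoffs $p_0 = 223, 107, 73, 67, 61, 59$, each pass lowering the threshold on $q^n$ until it reaches roughly $1.101 \cdot 10^{97}$; since $q \ge 79$, this forces $n \le 51$, and the remaining pairs $(q,n)$ with $13 \le n \le 51$ are then dispatched by a SageMath routine that still checks the sieve inequality~\eqref{eq-prop1-crivo} for a suitable $p_0$.

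One caution about your step~(4): a ``direct SageMath verification of positivity of $N(\overline{R},x^n-1)$'' by enumeration is not a viable fallback in this range, since already $79^{13} > 10^{24}$. Any residual cases must therefore still be closed via the sieve inequality, exactly as the paper does; there is no brute-force escape hatch.
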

\begin{proof}
We use the sieve method given by Proposition \ref{prop1-crivo} in order to decrease the bound for $q^n$. From the %last 
previous proposition, we may consider $q^n< q_{\max}:=4.12 \cdot 10^{718}$. %Consider Proposition \ref{prop1-crivo} with a fixed 
Fix a prime number $p_0$, let $g=x^n-1$ and, for $i \in \{1,2,3\}$, let $\ell_i$ be the product of prime numbers which divide $\frac{q^n-1}{2}$ less than $p_0$. Denote by $t$ the number of prime numbers less than $p_0$ which divide $q^n-1$, so that $t < \pi(p_0)$ and $W(\ell_i)= 2^t$. 

Let also $\{ p_1, \ldots , p_u \}$ be the set of primes
which divide $q^n-1$ and are greater than or equal to $p_0$ and
denote by $\mathcal{P}(u,p_0)$, $\mathcal{S}(u,p_0)$ the product and the sum of the inverses, respectively, of the first $u$ prime numbers greater than or equal to $p_0$. Therefore,
\[\mathcal{P}(u,p_0) \le p_1 \cdots  p_u  \ \ \ \text{and} \ \ \ 
\sum_{i=1}^u \frac{1}{p_i} \le \mathcal{S}(u,p_0).
\]
Let $u(t)=\max \{ u \mid 2 \cdot \mathcal{P}_t \cdot \mathcal{P}(u,p_0) \leq q_{\max} \}$,
where $\mathcal{P}_t$ is the product of the first $t$ prime numbers. 
From Proposition \ref{prop1-crivo}, we get
$\delta \geq 1-3\mathcal{S}(u(t),p_0)$ and $\Delta \leq 2 + \frac{3u(t)-1}{1-3\mathcal{S}(u(t),p_0)}=: \Delta(t)$.
We need to choose $p_0$ such that $1-3\mathcal{S}(u(t),p_0)>0$ in order to use
Proposition \ref{prop1-crivo}. %So that, 
Observe that if
$
q^{\frac{n}{2}-2} \geq 3 \ \Delta(t) \ 2^{3t+n+1}
$
for some $2 \le t \le \pi(p_0-1)$, then Inequality \eqref{eq-prop1-crivo} holds. Using the fact that $n \geq 13$ and considering $q \geq 79$, we can rewrite the previous condition as
\begin{equation}\label{eq1-bound2}
\left( q^n \right)^{\frac{1}{2}-\frac{2}{13}-\log_{79} 2} \geq 3 \ \Delta(t) \ 2^{3(t+1)}.
\end{equation}
Using $p_0=223$, we have that Inequality \eqref{eq1-bound2} holds for $q^n \geq 4.572 \cdot 10^{252}$. Repeating the process with 
$p_0=107,73,67,61, 59,59$ sequentially, we obtain the bound $q^n \geq 1.101 \cdot 10^{97}$. 
Note that if $n \geq 52$, then $q^n > 1.101 \cdot 10^{97}$, because $q \geq 79$. %So that, 
Hence it is only required to consider the cases $13 \leq n \leq 51$.

Now we use the Sagemath procedure \textbf{\ref{specialsieve}} (see Appendix \ref{apendice}). The value of $\Delta$ given in \textbf{\ref{specialsieve}} 
line \ref{Delta} is greater than or equal to
$\Delta$ from Proposition \ref{prop1-crivo}, since the pair $(S,u_0)$
given by Procedure \textbf{\ref{sumfactors}} 
satisfies $u(i) \le u_0 $ for $i \in \{1,2,3 \}$ and $\delta \ge 1-3S$ for
$(u,\delta)$ in Proposition \ref{prop1-crivo}. If \textbf{\ref{specialsieve}($q,n,p_0$)} 
returns True, then Inequality \eqref{eq-prop1-crivo} holds. Using this procedure we obtain that $(q,n) \in A$ for $13 \leq n \leq 51$ and $q \ge 79$.
\end{proof}

\section*{Acknowledgements}

The authors would like to thank the referee for the careful reading and helpful comments that improved the presentation of the paper.

The authors were partially supported by FAPEMIG grant RED-00133-21, the second and fourth authors were partially supported by FAPEMIG grant APQ-02546-21, the third author was partially supported by FAPEMIG grant APQ-00470-22, and the fourth author was also partially supported by NAWI Graz Postdoctoral Program.

\newpage

\appendix

%\section*{Appendix A: Procedures in SageMath}\label{apendice}
\section{Procedures in SageMath}\label{apendice}

{\scriptsize
%\IncMargin{1em}
	\begin{procedure}
		\KwIn{A prime power $q$ and a positive integer $n$}
		\KwOut{A non negative integer or $\emptyset$}
		$w \gets$ number of monic irreducible factors of $x^n-1$ in $\mathbb{F}_q[x]$\\
		\uIf{$\gcd(q,n)>1$}
			{$Length \gets w$}
		\uElseIf{$\gcd(q-1,n)>1$}
			{$Length \gets w-2$}
		\uElseIf{$\gcd(q+1,n)>1$}
			{$Length \gets w-1$}
		\Else{$Length \gets \emptyset$}
        \Return $Length$
		\caption{NumberPolFactors($q,n$)}\label{valw1w2}
	\end{procedure}
}

{\scriptsize
\begin{procedure}
\KwIn{A positive integer $T$ and a prime number $p_0$}
\KwOut{A pair $(S,u_0)$ where $S$ is a positive real number and $u_0$ is a positive integer}
$(p,S,u_0) \gets (p_0,0,0)$\\
%$S \gets 0$\\
%$u_0 \gets 0$\\
\While{$T\ge p$ and $p<1000$}
	{
	\uIf{$p$ divides $T$}
		{
		$T \gets \frac{T}{p}$\\
		$S\gets S+ \frac{1}{p}$\\
		$u_0 \gets u_0+1$\\
		\While{$p$ divides $T$}
			{
			$T \gets \frac{T}{p}$\\
			}
		$p \gets$ next prime after $p$
		}
	\Else{
		$p \gets$ next prime after $p$
		}
	}
\While{$p<T$}
	{
	$T \gets \frac{T}{p}$\\
	$S\gets S+ \frac{1}{p}$\\
	$u_0 \gets u_0+1$\\
	$p \gets $ next prime after $p$
	}
\Return $(S,u_0)$
\caption{SumFactors($T,p_0$)}\label{sumfactors}
	\end{procedure}
}

{\scriptsize
%\IncMargin{1em}
\begin{procedure}
\KwIn{A prime power $q$, a positive integer $n$ and a prime number $p_0$}
\KwOut{True or False}
%$res \gets 0$\\
%$a \gets $ a primitive element of $\mathbb{F}_{q^n}$\\
$w_1 \gets$ \ref{valw1w2}($q,n$)\\
\uIf{$w_1 \neq \emptyset$}
	{
	$T \gets \frac{q^n-1}{2}$\\
	$p \gets 2$\\
	$\ell \gets 1$\\
	\While{$p<p_0$}
		{
		\While{$p$ divides $T$}
			{
			$T \gets \frac{T}{p}$\\
			$\ell \gets \ell \cdot p$
			}
		$p \gets $ next prime after $p$
		}
%	$p \gets p_0$\\
	$w_2 \gets$ number of prime divisors of $\ell$\\ 
	$(S,u_0) \gets $ \ref{sumfactors}$(T,p_0)$\\
	$\delta \gets 1 - 3 S$\\
	\uIf{$\delta>0$}
		{
		$\Delta \gets 2+\frac{3\cdot u_0-1}{\delta}$  \nllabel{Delta}\\
		$res\gets \left[ q^{n/2-2}\ge 3 \cdot \Delta \cdot 2^{w_1+3w_2+3}\right]$
		}
	\Else{$res \gets$ False}
	}
\Else{$res \gets$ False}
\Return $res$
\caption{SpecialSieve($q,n,p_0$)}\label{specialsieve}
\end{procedure}
}

\end{document}